\theoremstyle{plain}
\newtheorem{thm}{\sc Theorem}[section]
\newtheorem{defn}[thm]{\sc Definition}
\newtheorem{lem}[thm]{\sc Lemma}
\newtheorem{prop}[thm]{\sc Proposition}
\newtheorem{cor}[thm]{\sc Corollary}
\newtheorem{rem}[thm]{\sc Remark}
\title[Almost Periodic and Uniformly Continuous functionals]{Weakly Almost Periodic and Uniformly Continuous Functionals on the Orlicz Fig\`{a}-Talamanca Herz algebras}
\author{Rattan Lal}
\address{Rattan Lal,\newline\indent Department of Mathematics,\newline\indent Indian Institute of Technology Delhi,\newline\indent Delhi - 110016, India.}
\email{rattanlaltank@gmail.com}
\author{N. Shravan Kumar}
\address{N. Shravan Kumar,\newline\indent Department of Mathematics,\newline\indent Indian Institute of Technology Delhi,\newline\indent Delhi - 110016, India.}
\email{shravankumar@maths.iitd.ac.in}
\begin{document}

\begin{abstract}
In this paper we study weakly almost periodic and uniformly continuous functionals on the Orlicz Fig\`{a}-Talamanca Herz algebras associated to a locally compact group. We show that a unique invariant mean exists on the space of weakly almost periodic functionals. We also characterise discrete groups in terms of the inclusion of the space of uniformly continuous functions inside the space of weakly almost periodic functionals.
\end{abstract}

\keywords{Orlicz Fig\`{a}-Talamanca Herz algebras, weakly almost periodic functionals, uniformly continuous functionals, }
\subjclass[2010]{Primary 43A15, 46J10}

\maketitle

\section{Introduction}
Let $G$ be a locally compact group and let $A(G)$ denote the Fourier algebra associated with $G.$ In 1973, Dunkl and Ramirez \cite{DR} defined the concept of almost periodic functionals on the Fourier algebra, extending the classical notion of almost periodic functions on locally compact abelian groups. Let $1<p<\infty$ and let $A_p(G)$ denote the Fig\`{a}-Talamanca Herz algebra introduced and studied by Herz \cite{H}. In \cite{G1}, Granirer extended the results of Dunkl and Ramirez to the $A_p(G)$ algebras. 

In \cite{LK1}, the authors have introduced and studied the $L^\Phi$-versions of the Fig\`{a}-Talamanca Herz algebras. Here $L^\Phi$ denotes the Orlicz space corresponding to the Young function $\Phi.$ The space $A_\Phi(G)$ is defined as the space of all continuous functions $u,$ where $u$ is of the form $$u=\underset{n=1}{\overset{\infty}{\sum}}f_n*\check{g_n},$$ where $f_n\in L^\Phi(G),$ $g_n\in L^\Psi(G),$ $(\Phi,\Psi)$ is a pair of complementary Young functions satisfying the $\Delta_2$-condition and $$\underset{n=1}{\overset{\infty}{\sum}}N_\Phi(f_n)\|g_n\|_\psi<\infty.$$ 

The aim of this paper is to extend the results of Granirer to the context of $A_\Phi(G)$ algebras. More precisely, in section 3, we define the notion of weakly almost periodic functions and study its properties. We also show that a unique invariant mean exists on the space of weakly almost periodic functionals. In section 4, we take the study of uniformly continuous functionals and show that it is an algebra.

\section{Preliminaries}

A convex function $ \Phi: \mathbb{R}\rightarrow [0,\infty] $ is called a Young function if it is symmetric and  satisfies $ \Phi(0)= 0 $ and $\underset{x\rightarrow \infty}{\lim}  \Phi(x)= + \infty $. For any Young function $ \Phi,$ define another Young function $\Psi$ as
$$ \Psi(y):= \sup{\{x|y|-\Phi(x) : x\geq0\}} , y\in\mathbb{R}.$$   This Young function $\Psi $ is called as the complementary function to $ \Phi $ and 
the pair $ (\Phi,\Psi) $ is called a complementary pair of Young functions. 

Let $G$ be a locally compact group with a left Haar measure $dx.$ We say that a  Young function $ \Phi $  satisfies  the $\Delta_{2}$-condition, denoted $\Phi\in\Delta_{2},$ if there exists a constant $ K>0 $ and $ x_{0} > 0$ such that $\Phi(2x)\leq K\Phi(x)$ whenever $x\geq x_{0}$ if $G$ is compact and the same inequality holds with $ x_{0}=0 $ if $G$ is non compact.

The Orlicz space, denoted $ L^{\Phi}(G),$ is a vector space consisting of measurable functions, defined as $$ {L}^{\Phi}(G) = \left\{ f: G \rightarrow  \mathbb{C}:f \mbox{ is measurable and }\int_G\Phi(\beta |f|)\ dx <\infty \text{ for some}~ \beta>0  \right\} $$ The Orlicz space $ L^{\Phi}(G) $ is a Banach space when equipped with the norm
$$N_{\Phi}(f) = \inf \left\{ k>0 :\int_G\Phi\left(\frac{|f|}{k}\right) dx \leq1 \right\}.$$ The above norm is called as the Luxemburg norm or Gauge norm.  If $(\Phi,\Psi)$ is a complementary Young pair, then there is a norm on $L^\Phi(G),$ equivalent to the Luxemberg norm, given by, $$ \|f\|_{\Phi} =\sup \Bigg \{ \int_{G}|fg|dx :g\in L^\Psi(G), \int_{G}\Psi(|g|)dx\leq1 \Bigg\}.$$ This norm is called as the Orlicz norm. 

Let $C_{c}(G)$ denote the space of all continuous functions on $ G $ with compact support. If a Young function $ \Phi $  satisfies  the $\Delta_{2}$ -condition, then $C_c(G)$ is dense in $L^\Phi(G).$ Further, if the complementary function $\Psi$ is such that $\Psi$ is continuous and $\Psi(x)=0$ iff $x=0,$ then the dual of $ (L^{\Phi}(G),N_{\Phi}(\cdot)) $ is isometrically isomorphic to $ (L^{\Psi}(G),\|\cdot\|_{\Psi}).$ In particular, if both $\Phi$ and $\Psi$ satisfies the $\Delta_2$-condition, then $L^\Phi(G)$ is reflexive.

The following is the Orlicz space analogue of \cite[Proposition 2.42]{F}.

\begin{thm}\label{app_id}
Let $\Phi$ be a $\Delta_2$-regular Young function and let $\{f_\alpha\}$ be a bounded approximate identity for $L^1(G).$ Then, for any $g\in L^\Phi(G),$ we have $$\underset{\alpha}{\lim}\ \|g*f_\alpha-g\|_\Phi=0.$$
\end{thm}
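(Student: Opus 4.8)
The plan is to combine the density of $C_c(G)$ in $L^\Phi(G)$, which holds since $\Phi\in\Delta_2$, with the fact that convolution makes $L^\Phi(G)$ a Banach right $L^1(G)$-module. From \cite{LK1} I will take the associativity $(g*a)*b=g*(a*b)$ together with the Orlicz version of Young's inequality $\|g*f\|_\Phi\le \|g\|_\Phi\,\|f\|_1$ for $g\in L^\Phi(G),\ f\in L^1(G)$; writing $M:=\sup_\alpha\|f_\alpha\|_1<\infty$, all that is really needed is the resulting uniform bound $\|g*f_\alpha\|_\Phi\le M\|g\|_\Phi$. One could instead run Folland's original argument through the continuity of the translation map $y\mapsto L_y g$ from $G$ into $L^\Phi(G)$ — itself reduced to the case $g\in C_c(G)$ — together with Minkowski's integral inequality for $\|\cdot\|_\Phi$; the module formulation seems cleaner and it applies to an arbitrary bounded approximate identity rather than only to approximate identities of bump type.

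The preparatory step is to verify that $L^\Phi(G)$ is an essential $L^1(G)$-module, i.e. that the set $\{h*u:h\in L^\Phi(G),\ u\in L^1(G)\}$ is dense in $L^\Phi(G)$. Since $C_c(G)$ is dense, it is enough to approximate a fixed $g\in C_c(G)$ by such convolutions. Choosing bump functions $\psi_U\in C_c(G)$ with $\psi_U\ge 0$, $\int_G\psi_U=1$ and $\operatorname{supp}\psi_U\subseteq U$, as $U$ runs through a neighbourhood base at $e$, a routine estimate based on the uniform continuity of $g$ gives $\|g*\psi_U-g\|_\infty\to 0$, and for $U$ small every $g*\psi_U$ is supported in one fixed compact set $K$ (a compact neighbourhood of $\operatorname{supp}g$). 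Since $\mathbf 1_K\in L^\Phi(G)$ and $|h|\le\|h\|_\infty\mathbf 1_K$ forces $\|h\|_\Phi\le\|\mathbf 1_K\|_\Phi\,\|h\|_\infty$ whenever $h$ is supported in $K$, it follows that $g*\psi_U\to g$ in $\|\cdot\|_\Phi$. As $g*\psi_U\in L^\Phi(G)*L^1(G)$, essentiality is proved.

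To finish, let $g\in L^\Phi(G)$ and $\varepsilon>0$, and pick $h\in L^\Phi(G)$, $u\in L^1(G)$ with $\|g-h*u\|_\Phi<\varepsilon$. Since $\{f_\alpha\}$ is an approximate identity for $L^1(G)$ we have $u*f_\alpha\to u$ in $L^1(G)$, so continuity of convolution yields $(h*u)*f_\alpha=h*(u*f_\alpha)\to h*u$ in $\|\cdot\|_\Phi$. Hence
\[
\|g*f_\alpha-g\|_\Phi\le\|(g-h*u)*f_\alpha\|_\Phi+\|(h*u)*f_\alpha-h*u\|_\Phi+\|h*u-g\|_\Phi\le (M+1)\varepsilon+\|(h*u)*f_\alpha-h*u\|_\Phi ,
\]
so $\limsup_\alpha\|g*f_\alpha-g\|_\Phi\le(M+1)\varepsilon$, and letting $\varepsilon\to 0$ gives the claim. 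The one genuinely technical point is the first: having the inequality $\|g*f\|_\Phi\le\|g\|_\Phi\|f\|_1$ (equivalently, a uniform bound $\sup_\alpha\|g*f_\alpha\|_\Phi\le M\|g\|_\Phi$) available — on non-unimodular $G$ this needs care with the modular function, but it is part of the $L^1(G)$-module structure of $L^\Phi(G)$ set up in \cite{LK1}; the rest is soft.
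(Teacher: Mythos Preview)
The paper does not actually prove this theorem; it is stated without argument as the Orlicz analogue of \cite[Proposition~2.42]{F}. Folland's proof there is precisely the translation-continuity approach you sketch as an alternative: write $g*\psi_U-g$ as an average of translates $R_zg-g$ against the bump $\psi_U$, invoke Minkowski's integral inequality for the Orlicz norm, and use that $z\mapsto R_zg$ is continuous from $G$ into $L^\Phi(G)$ (checked first for $g\in C_c(G)$, then extended by density using $\Phi\in\Delta_2$).

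Your module-theoretic route is genuinely different and, when its hypotheses are met, covers more --- an arbitrary bounded approximate identity rather than only the bump type. The substantive caveat is exactly the one you flag at the end: the bound $\|g*f\|_\Phi\le\|g\|_\Phi\|f\|_1$ for the \emph{right} action is immediate when $G$ is unimodular (right translation is then isometric on $L^\Phi$), but for non-unimodular $G$ the clean Young inequality is for the \emph{left} action $f*g$, and the module structure set up in \cite{LK1} via $T_\mu f=\mu*f$ is the left one. You should verify that the right-module bound you invoke is really available there; both of your displayed estimates --- on $(g-h*u)*f_\alpha$ and on $h*(u*f_\alpha-u)$ --- depend on it. For the paper's purposes this is harmless in any case: Theorem~\ref{app_id} is used only to feed into Lemma~\ref{App}, where a bump-type net with supports shrinking to $e$ (hence $\Delta\approx 1$ on the relevant set) suffices, and your essentiality step already contains the required convergence for $g\in C_c(G)$ in that situation.
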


\noindent For more details on Orlicz spaces, we refer the readers to \cite{RR}.

Let $\Phi$ and $\Psi$ be a pair of complementary Young functions satisfying the $\Delta_2$ condition. Let $$A_\Phi(G)=\left\{u=\underset{n=1}{\overset{\infty}{\sum}}f_n*\check{g_n}:\{f_n\}\subset L^\Phi(G),\{g_n\}\subset L^\Psi(G)\mbox{ and }\underset{n=1}{\overset{\infty}{\sum}}N_\Phi(f_n)\|g_n\|_\Psi<\infty\right\}.$$ Note that if $u\in A_\Phi(G)$ then $u\in C_0(G).$ If $u\in A_\Phi(G),$ define $\|u\|_{A_\Phi}$ as $$\|u\|_{A_\Phi}:=\inf\left\{\underset{n=1}{\overset{\infty}{\sum}}N_\Phi(f_n)\|g_n\|_\Psi:u=\underset{n=1}{\overset{\infty}{\sum}}f_n*\check{g_n}\right\}.$$ The space $A_\Phi(G)$ equipped with the above norm and with the pointwise addition and multiplication becomes a commutative Banach algebra \cite[Theorem 3.4]{LK1}. In fact, $A_\Phi(G)$ is a commutative, regular and semisimple Banach algebra with spectrum homeomorphic to $G$ \cite[Corollary 3.8]{LK1}. This Banach algebra $A_\Phi(G)$ is called as the Orlicz Fig\`{a}-Talamanca Herz algebra.

Let $ \mathcal{B}(L^{\Phi}(G))$ be the linear space of all bounded linear operators on $L^{\Phi}(G)$ equipped with the operator norm. For a bounded complex Radon measure $\mu$  on $G$ and $f\in L^{\Phi}(G),$ define $T_\mu:L^\Phi(G)\rightarrow L^\Phi(G)$ by $ T_{\mu}(f)=\mu*f.$ It is clear that $T_\mu\in \mathcal{B}(L^{\Phi}(G)).$ Let $ PM_{\Phi}(G)$ denote the closure of $\{T_{\mu}:\mu\mbox{ is a bounded complex Radon measure}\}$ in $\mathcal{B}(L^{\Phi}(G))$ with respect to the ultraweak topology. It is proved in \cite[Theorem 3.5]{LK1}, that for a locally compact group $G,$ the dual of $ A_{\Phi}(G)$ is isometrically isomorphic to $PM_{\Psi}(G).$ 

For further details see \cite{LK1}.

We shall denote by $B_\Phi(G)$ the space of all continuous functions $v:G\rightarrow \mathbb{C}$ such that $uv\in A_\phi(G)$ for all $u\in A_\Phi(G).$ Equip $B_\Phi(G)$ with the operator norm. With this norm $B_\Phi(G)$ becomes a commutative Banach algebra.

For $ u \in  B_{\Phi}(G) $ and $ T \in  PM_{\Psi}(G),$ define $ u.T \in  PM_{\Psi}(G) $ by $$ \langle v ,u.T \rangle :=  \langle u v ,T \rangle ~~~ \forall ~~v \in A_{\Phi}(G). $$ With this action, $PM_{\Psi}(G) $ becomes a $  B_{\Phi}(G) $-module. Further, if $ m \in  PM_{\Psi}(G)^{\prime} $ and $ u \in  B_{\Phi}(G),$ define $ u.m \in  PM_{\Psi}(G)^{\prime} $ by $$ \langle T ,u.m \rangle :=  \langle u.T ,m \rangle ~~~ \forall ~~T \in PM_{\Psi}(G). $$ This action makes $PM_{\Psi}(G)^{\prime} $  also into a $  B_{\Phi}(G) $-module. We now define a multiplication on $PM_{\Psi}(G)^{\prime} $, called Arens multiplication, as follows. For $ m \in  PM_{\Psi}(G)^{\prime} $ and $ T \in  PM_{\Psi}(G) $, define $ m \odot T \in PM_{\Psi}(G) $ as $$ \langle v ,m \odot T \rangle :=  \langle v.T ,m \rangle ~~~ \forall ~~v \in A_{\Phi}(G). $$ 

Similarly, if $u\in A_\Phi(G)$ and $f\otimes g\in L^\Phi(G)\widehat{\otimes}_{L^1(G)}L^\Psi(G),$ define $u.(f\otimes g)(x,y)=u(yx^{-1})f(x)g(y),$ $x,y\in G.$ Here $L^\Phi(G)\widehat{\otimes}_{L^1(G)}L^\Psi(G)$ denotes the module tensor product. For more details see \cite{Rie}. Note that $u.(f\otimes g)\in L^\Phi(G)\widehat{\otimes}_{L^1(G)}L^\Psi(G)$ and thus with the above action, the space $L^\Phi(G)\widehat{\otimes}_{L^1(G)}L^\Psi(G)$ becomes an $A_\Phi(G)$-module. Further, the map $f\otimes g\mapsto f*\check{g}$ from $L^\Phi(G)\widehat{\otimes}_{L^1(G)}L^\Psi(G)$ to $A_\Phi(G)$ is an $A_\Phi(G)$-module map. 

We shall denote by $CV_\Psi(G)$ the space of all bounded linear operators $T:L^\Psi(G)\rightarrow L^\Psi(G)$ such that $T(f*g)=T(f)*g$ for all $f\in L^\Psi(G)$ and $g\in L^1(G).$ It now follows from \cite[Corollary 2.13]{Rie} that the topological dual of $L^\Phi(G)\widehat{\otimes}_{L^1(G)}L^\Psi(G)$ is isometrically isomorphic to $CV_\Psi(G).$ It is clear that $PM_\Psi(G)$ is contained in $CV_\Psi(G)$ and in fact this inclusion is an $A_\Phi(G)$-module map.

Throughout this paper, $G$ will denote a locally compact group and $(\Phi,\Psi)$ will denote a pair of complementary Young functions satisfying the $\Delta_2$-condition.

\section{Weakly almost periodic functionals}
In this section, we study the properties of the space of weakly almost periodic functionals of $A_\Phi(G).$ Our main aim in this section is to show that a unique invariant mean exists on the space of weakly almost periodic functionals.
\begin{defn}
An almost periodic functional on $A_\Phi(G)$ is a functional $T\in PM_\Psi(G)$ such that the mapping $u\mapsto u.T$ from $A_\Phi(G)$ into $PM_\Psi(G)$ is a compact operator. Similarly, a weakly almost periodic functional on $A_\Phi(G)$ is  a functional $T\in PM_\Psi(G)$ for which the mapping $u\mapsto u.T$ from $A_\Phi(G)$ into $PM_\Psi(G)$ is a weakly compact operator. 
\end{defn}
We denote by $AP_\Psi(\widehat{G})$ and $WAP_\Psi(\widehat{G})$ the space of all almost periodic functionals and weakly almost periodic functionals respectively.
\begin{lem}\label{WAP_Prop}
\mbox{}
\begin{enumerate}[(i)]
\item The spaces $AP_\Psi(\widehat{G})$ and $WAP_\Psi(\widehat{G})$ are norm closed $B_\Phi(G)$- submodules of $PM_\Psi(G).$
\item The identity operator $I\in AP_\Psi(\widehat{G}).$
\item The inclusion $AP_\Psi(\widehat{G})\subset WAP_\Psi(\widehat{G})$ holds.
\end{enumerate}
\end{lem}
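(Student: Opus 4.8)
The plan is to verify the three assertions by unwinding the definitions and using standard facts about (weakly) compact operators. Throughout, write $L_T:A_\Phi(G)\to PM_\Psi(G)$ for the module map $u\mapsto u.T$, so that $T\in AP_\Psi(\widehat G)$ (resp. $WAP_\Psi(\widehat G)$) precisely when $L_T$ is compact (resp. weakly compact).

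\textbf{Part (i).} First I would show these are $B_\Phi(G)$-submodules. Given $T\in WAP_\Psi(\widehat G)$ and $w\in B_\Phi(G)$, I need $L_{w.T}$ weakly compact. The key identity is $u.(w.T)=(uw).T$ for $u\in A_\Phi(G)$, which follows directly from the definitions of the two module actions via $\langle v,u.(w.T)\rangle=\langle uv,w.T\rangle=\langle (uv)w,T\rangle=\langle (uw)v,T\rangle=\langle v,(uw).T\rangle$. Hence $L_{w.T}=L_T\circ M_w$, where $M_w:A_\Phi(G)\to A_\Phi(G)$ is multiplication by $w$, a bounded operator since $w\in B_\Phi(G)$. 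A composition of a bounded operator with a weakly compact (resp. compact) operator is weakly compact (resp. compact), so $w.T$ lies in the same space. Linearity is clear since $T\mapsto L_T$ is linear and the (weakly) compact operators form a linear subspace. For norm-closedness: if $T_n\to T$ in $PM_\Psi(G)$ with each $L_{T_n}$ weakly compact, then $\|L_{T_n}-L_T\|=\sup_{\|u\|_{A_\Phi}\le 1}\|u.(T_n-T)\|\le \|T_n-T\|$ (using $\|u.S\|\le\|u\|_{A_\Phi}\|S\|$, which comes from the module action being contractive), so $L_{T_n}\to L_T$ in operator norm; since the weakly compact (resp. compact) operators are norm-closed in $\mathcal B(A_\Phi(G),PM_\Psi(G))$, $L_T$ is weakly compact (resp. compact), i.e. $T\in WAP_\Psi(\widehat G)$ (resp. $AP_\Psi(\widehat G)$).

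\textbf{Part (ii).} Here I must show $L_I:u\mapsto u.I$ is a compact operator from $A_\Phi(G)$ into $PM_\Psi(G)$. The point is to identify $u.I$ explicitly: for $v\in A_\Phi(G)$, $\langle v,u.I\rangle=\langle uv,I\rangle$, and since $I$ acting as an element of $PM_\Psi(G)=A_\Phi(G)'$ is evaluation at the identity $e$ of $G$ (the identity operator corresponds, under the duality $A_\Phi(G)'\cong PM_\Psi(G)$, to the point evaluation $v\mapsto v(e)$), we get $\langle v,u.I\rangle=(uv)(e)=u(e)v(e)=u(e)\langle v,I\rangle$. Thus $u.I=u(e)\,I$, so $L_I$ is the rank-one operator $u\mapsto u(e)I$, which is certainly compact. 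I should double-check that $I$ indeed corresponds to point evaluation at $e$ under the isometric isomorphism $A_\Phi(G)'\cong PM_\Psi(G)$ from \cite[Theorem 3.5]{LK1}; $T_{\delta_e}$ is the identity operator and $\langle f*\check g, T_{\delta_e}\rangle = (f*\check g)(e)$, which extends by density and continuity to all of $A_\Phi(G)$, confirming this.

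\textbf{Part (iii).} This is immediate: every compact operator between Banach spaces is weakly compact (the norm closure of the image of the unit ball is weakly compact because it is norm-compact), so $L_T$ compact implies $L_T$ weakly compact, giving $AP_\Psi(\widehat G)\subseteq WAP_\Psi(\widehat G)$.

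The main obstacle is the bookkeeping in Part (ii): one must be careful that the identification of $I\in PM_\Psi(G)$ with the functional $v\mapsto v(e)$ is legitimate and that the computation $u.I=u(e)I$ is valid for all $u\in A_\Phi(G)$ and not merely on a dense subalgebra, but this follows from continuity of both sides in $u$. Parts (i) and (iii) are routine once the identity $u.(w.T)=(uw).T$ is recorded.
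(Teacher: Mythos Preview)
Your argument is correct and follows essentially the same approach as the paper: the paper dismisses (i) and (iii) as ``consequences of the definition of norm and the compact operator'' and proves (ii) via the identical computation $\langle v,u.I\rangle=(uv)(e)=u(e)\langle v,I\rangle$, so $u.I=u(e)I$. Your version simply unpacks (i) and (iii) explicitly (the factorisation $L_{w.T}=L_T\circ M_w$ and the norm-closedness of the ideal of (weakly) compact operators) and adds the verification that $I=T_{\delta_e}$ acts as evaluation at $e$, which the paper takes for granted.
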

\begin{proof}
Note that (i) and (iii) are consequences of the definition of norm and the compact operator. Further, for any $u,v\in A_\Phi(G),$ $$\langle v,u.I\rangle=\langle vu,I\rangle=(vu)(e)=u(e)\langle v,I\rangle=\langle v,u(e)I\rangle.$$ Since $v\in A_\Phi(G)$ is arbitrary, it follows that $u.I=u(e)I$ and thus (ii) is proved.
\end{proof}
Let $M(G)$ denote the space of all complex valued bounded Radon measures on $G.$ If $\mu\in M(G),$ then $\mu$ can be considered as a linear functional on $A_\Phi(G)$ as $$<\mu,\varphi>:=\int \varphi d\mu$$ for $\varphi\in A_\Phi(G).$ By definition, it is also clear that $\mu$ is continuous and $$\|\mu\|_{PM_\Psi(G)}\leq\|\mu\|_{M(G)}.$$
\begin{prop}
\mbox{}
\begin{enumerate}[(i)]
\item The space $M(G),$ considered as a subspace of $PM_\Psi(G),$ is a $B_\Phi(G)$-submodule.
\item The closure $\overline{M(G)}^{\|\cdot\|_{PM_\Psi(G)}}$ is contained in $WAP_\Psi(\widehat{G}).$
\item The space $\ell^1(G)$ is contained in $AP_\Psi(\widehat{G}).$
\end{enumerate}
\end{prop}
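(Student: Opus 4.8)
The plan is to prove the three parts in the order given, leveraging the module structure already established in the Preliminaries.

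For (i), the key observation is that for $\mu \in M(G)$ and $u \in B_\Phi(G)$, the functional $u.\mu$ acts on $A_\Phi(G)$ by $\langle u.\mu, \varphi\rangle = \langle \mu, u\varphi\rangle = \int u\varphi\, d\mu$. Since $u$ is a bounded continuous function (elements of $B_\Phi(G)$ are continuous by definition and bounded because they act as multipliers), the measure $u\, d\mu$ is again a bounded complex Radon measure on $G$, and it represents $u.\mu$. Thus $u.\mu \in M(G)$, so $M(G)$ is a $B_\Phi(G)$-submodule of $PM_\Psi(G)$. I would spell out the estimate $\|u\, d\mu\|_{M(G)} \le \|u\|_\infty \|\mu\|_{M(G)}$ just to confirm boundedness.

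For (ii), it suffices by Lemma \ref{WAP_Prop}(i) --- which says $WAP_\Psi(\widehat{G})$ is norm closed --- to show $M(G) \subseteq WAP_\Psi(\widehat{G})$. For $\mu \in M(G)$, consider the map $\Theta_\mu : u \mapsto u.\mu$ from $A_\Phi(G)$ into $PM_\Psi(G)$; by part (i) its range lies in $M(G) \subseteq PM_\Psi(G)$. The natural approach is to factor this map through a reflexive space: $u.\mu$ corresponds to the measure $u\, d\mu$, and I would like to realize this as living in a reflexive Banach space on which $\Theta_\mu$ is bounded, so that $\Theta_\mu$ is weakly compact. The cleanest route is to note that $\mu$ can be identified with an element $f \otimes g$ or a sum of such in $L^\Phi(G)\widehat{\otimes}_{L^1(G)} L^\Psi(G)$ after smoothing, or --- more robustly --- to exploit reflexivity of $L^\Phi(G)$ (valid since both $\Phi, \Psi \in \Delta_2$) to see that the module action of $A_\Phi(G)$ on the predual factors through a reflexive space. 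Concretely, for $\mu \in M(G)$ the operator $T_\mu$ on $L^\Psi(G)$ is bounded, and the map $u \mapsto u.T_\mu \in CV_\Psi(G) = (L^\Phi(G)\widehat{\otimes}_{L^1(G)}L^\Psi(G))^*$ can be analyzed via the bilinear form $(f,g)\mapsto \langle f*\check g, u.\mu\rangle = \int_G (u \cdot (f*\check g))\, d\mu$; since $L^\Phi(G)\widehat{\otimes}_{L^1(G)}L^\Psi(G)$ is built from the reflexive spaces $L^\Phi, L^\Psi$, a compactness-by-reflexivity argument (a bounded operator into the dual of a space that embeds nicely into a reflexive space) gives weak compactness. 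I expect this is the main obstacle: making precise exactly which reflexive space $\Theta_\mu$ factors through, since $M(G)$ itself is not reflexive. I would resolve it by first handling $\mu$ with $L^1$-density (i.e. $\mu$ absolutely continuous with, say, compactly supported continuous density, using Theorem \ref{app_id} and density of $C_c(G)$ in $L^\Phi(G)$) where $T_\mu$ is genuinely a convolution by an $L^1$ function and the factorization through $L^\Phi(G)$ is transparent, and then pass to general $\mu$ by weak-* approximation together with the norm-closedness of $WAP_\Psi(\widehat G)$.

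For (iii), I would show $\ell^1(G) \subseteq AP_\Psi(\widehat G)$. An element of $\ell^1(G)$, viewed in $M(G)$, is a norm-convergent sum $\mu = \sum_k c_k \delta_{x_k}$ with $\sum_k |c_k| < \infty$. Since $AP_\Psi(\widehat G)$ is norm closed by Lemma \ref{WAP_Prop}(i), it suffices to show each point mass $\delta_x \in AP_\Psi(\widehat G)$, i.e. that $u \mapsto u.\delta_x$ is a compact operator from $A_\Phi(G)$ to $PM_\Psi(G)$. But $\langle u.\delta_x, \varphi\rangle = \langle \delta_x, u\varphi\rangle = u(x)\varphi(x) = u(x)\langle \delta_x,\varphi\rangle$, so $u.\delta_x = u(x)\,\delta_x$; this map is rank one (its range is the one-dimensional space $\mathbb{C}\delta_x$), hence compact. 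Therefore $\delta_x \in AP_\Psi(\widehat G)$, and taking norm-limits of finite combinations $\sum_{k=1}^n c_k\delta_{x_k}$ gives $\ell^1(G) \subseteq AP_\Psi(\widehat G)$. This last part is the easiest and is essentially immediate once (i) is in place.
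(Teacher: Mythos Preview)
Your arguments for (i) and (iii) coincide with the paper's: in (i) you identify $u.\mu$ with the measure $u\,d\mu$, and in (iii) you reduce to point masses and observe that $u.\delta_x=u(x)\delta_x$ is rank one, exactly as the paper does.

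For (ii), however, your proposed resolution has a genuine gap. You suggest first proving $\mu\in WAP_\Psi(\widehat G)$ for $\mu$ with density in $C_c(G)$ (or $L^1(G)$), and then passing to arbitrary $\mu\in M(G)$ by ``weak-$*$ approximation together with the norm-closedness of $WAP_\Psi(\widehat G)$.'' These two ingredients do not combine: norm-closedness of $WAP_\Psi(\widehat G)$ gives you nothing against weak-$*$ limits, and a general $\mu\in M(G)$ (e.g.\ a point mass on a non-discrete group) is \emph{not} a norm limit of $L^1$ functions. So the reduction to absolutely continuous measures fails, and the factorization you sketch through $L^\Phi(G)$ (with respect to Haar measure) or through $L^\Phi(G)\widehat{\otimes}_{L^1(G)}L^\Psi(G)$ does not go through for singular $\mu$.

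The paper avoids this by choosing a different reflexive space that depends on $\mu$ itself. After reducing to a probability measure $\mu$, it considers the Orlicz space $L^\Phi(G,d\mu)$ built with respect to $\mu$ rather than Haar measure. Since $\mu$ is finite one has $A_\Phi(G)\subset L^\infty(G,d\mu)\subset L^\Phi(G,d\mu)$, so the inclusion $A_\Phi(G)\hookrightarrow L^\Phi(G,d\mu)$ is bounded; and the map $S:L^\Phi(G,d\mu)\to PM_\Psi(G)$, $\langle v,Sf\rangle=\int_G vf\,d\mu$, is bounded with $Su=u.\mu$. Because $(\Phi,\Psi)$ both satisfy $\Delta_2$, $L^\Phi(G,d\mu)$ is reflexive, so its unit ball is weakly (sequentially) compact, and weak compactness of $u\mapsto u.\mu$ follows immediately. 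This handles every $\mu\in M(G)$ directly, with no approximation step needed.
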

\begin{proof}
(i) Let $\mu\in M(G)$ and $u\in B_\Phi(G).$ Then for $v\in A_\Phi(G),$ we have $$<v,u.\mu>=<uv,\mu>=\int uv\ d\mu= \int v(u d\mu).$$ Thus (i) follows. \\
(ii) It is enough to prove that $M(G)\subset WAP_\Psi(\widehat{G}).$ In order to prove this, it is enough to show that the mapping $u\mapsto u.\mu$ from $A_\Phi(G)$ to $PM_\Psi(G)$ is a weakly compact operator, for a probability measure $\mu$ on $G.$ Note that this is equivalent to saying that the set $\{u.\mu:u\in A_\Phi(G),\|u\|_{A_\Phi(G)}\leq1\}$ is relatively weakly sequentially compact, for a probability measure $\mu$ on $G.$ 

Let $\mu\in M(G)$ be a probability measure. Define $S:L^\Phi(G,d\mu)\rightarrow PM_\Psi(G)$ as $$\langle v,Sf\rangle=\int_Gvf\ d\mu,\ f\in L^\Phi(G,d\mu)\mbox{ and }v\in A_\Phi(G).$$ Since $\mu$ is a probability measure, it follows that $$A_\Phi(G)\subset L^\infty(G,d\mu)\subset L^\Phi(G,d\mu)\subset L^1(G,d\mu)$$ and hence $S$ is continuous, in particular weakly continuous. Further, it follows from the definition of the operator $S$ and from (i) that, for $u\in A_\Phi(G),$ $Su=u.\mu.$ The conclusion now follows from the fact that the unit ball of $L^\Phi(G)$ is weakly sequentially compact.

(iii) As any element of $\ell^1(G)$ is a $\|\cdot \|_{M(G)}$ limit of finite linear combinations of point masses, it is enough to prove that the point masses are in $AP_\Psi(\widehat{G}).$ For any $x\in G,$ $\{u\delta_x:u\in A_\Phi(G),\|u\|_{A_\Phi(G)}\leq 1\}\subset\{\alpha\delta_x:\alpha\in\mathbb{C},|\alpha|\leq1\}$ and hence (iii) follows.
\end{proof}
Here is the promised result on the existence of a unique invariant mean on $WAP_\Psi(\widehat{G}).$
\begin{thm}\label{WAP_Inv_Mean}
The space $WAP_\Psi(\widehat{G})$ has a unique topological invariant mean $m$ such that, for $\mu\in M(G),$ $m(\mu)=\mu(\{e\}),$ where $e$ denotes the identity element of the group $G.$
\end{thm}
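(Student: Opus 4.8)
The plan is to build the invariant mean by pushing the evaluation character $u\mapsto u(e)$ of $A_\Phi(G)$ forward along a net of ``bump functions'' concentrated at $e$, and to read off uniqueness from the weak compactness that is built into the definition of $WAP_\Psi(\widehat{G})$. Recall that a topological invariant mean on $WAP_\Psi(\widehat{G})$ is an $m\in WAP_\Psi(\widehat{G})'$ with $\|m\|=1=\langle I,m\rangle$ and $\langle u.T,m\rangle=u(e)\langle T,m\rangle$ for all $u\in A_\Phi(G)$ and $T\in WAP_\Psi(\widehat{G})$; this is meaningful because $WAP_\Psi(\widehat{G})$ is a $B_\Phi(G)$-submodule of $PM_\Psi(G)$ containing $I$ (Lemma~\ref{WAP_Prop}) and $\|I\|_{PM_\Psi(G)}=1$. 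I would first fix a neighbourhood basis $\{V_i\}_{i\in\mathcal{I}}$ at $e$, directed by reverse inclusion, and produce $u_i\in A_\Phi(G)$ with $u_i(e)=1$, with $\mathrm{supp}\,u_i$ eventually contained in any given neighbourhood of $e$, and with $\|u_i\|_{A_\Phi}\to 1$. Concretely $u_i=f_i*\check{g_i}$ with $f_i,g_i$ supported near $e$, and the sharpness of the Orlicz--H\"older inequality $|\int fg|\le N_\Phi(f)\|g\|_\Psi$ lets one normalise so that $\int f_i g_i=u_i(e)=1$ while $N_\Phi(f_i)\|g_i\|_\Psi\to 1$.

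The technical core is the following localisation estimate: for every $w\in A_\Phi(G)$ one has $\|wu_i-w(e)u_i\|_{A_\Phi}\to 0$. To see this, fix a plateau $p\in A_\Phi(G)$ with $p\equiv 1$ on a neighbourhood of $e$ and put $h=w-w(e)p\in A_\Phi(G)$, so $h(e)=0$. Using that $A_\Phi(G)$ is regular and that $\{e\}$ is a Ditkin set for $A_\Phi(G)$ (as should follow from the analysis of \cite{LK1}, in parallel with the case of $A_p(G)$), one approximates $h$ in $A_\Phi$-norm by functions $v$ vanishing on a neighbourhood of $e$; for $i$ large $vu_i=0$ and $pu_i=u_i$, whence $\|wu_i-w(e)u_i\|_{A_\Phi}=\|hu_i\|_{A_\Phi}=\|(h-v)u_i\|_{A_\Phi}\le\|h-v\|_{A_\Phi}\|u_i\|_{A_\Phi}$, which is small. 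Dualising, $\langle wu_i,T\rangle-w(e)\langle u_i,T\rangle\to 0$ for all $w\in A_\Phi(G)$ and $T\in PM_\Psi(G)$.

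For existence, fix an ultrafilter $\mathcal{U}$ on $\mathcal{I}$ refining the order filter and set $m(T)=\lim_{\mathcal{U}}\langle u_i,T\rangle$ for $T\in WAP_\Psi(\widehat{G})$. Since $|\langle u_i,T\rangle|\le\|u_i\|_{A_\Phi}\|T\|$ the limit exists, $|m(T)|\le\|T\|$, and $m(I)=\lim_{\mathcal{U}}u_i(e)=1$, so $m$ is a mean; invariance is immediate from the localisation estimate, since $\langle u.T,m\rangle=\lim_{\mathcal{U}}\langle u_i u,T\rangle=\lim_{\mathcal{U}}u(e)\langle u_i,T\rangle=u(e)\langle T,m\rangle$. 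For $\mu\in M(G)\subseteq WAP_\Psi(\widehat{G})$, dominated convergence (using $u_i(e)=1$, $u_i\to 0$ pointwise off $e$, and $\|u_i\|_\infty\le\|u_i\|_{A_\Phi}$ bounded) gives $m(\mu)=\lim_{\mathcal{U}}\int u_i\,d\mu=\mu(\{e\})$. For uniqueness, let $m'$ be any topological invariant mean and $T\in WAP_\Psi(\widehat{G})$. Because $u\mapsto u.T$ is weakly compact, $\{u_i.T\}$ is relatively weakly compact in $PM_\Psi(G)$, so it converges weakly along $\mathcal{U}$ to some $S$, which lies in the norm-closed (hence weakly closed) subspace $WAP_\Psi(\widehat{G})$; testing against $w\in A_\Phi(G)$ and invoking the localisation estimate gives $\langle w,S\rangle=\lim_{\mathcal{U}}\langle wu_i,T\rangle=w(e)m(T)$, i.e. $S=m(T)I$. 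Since $m'$ is weakly continuous and $\langle u_i.T,m'\rangle=u_i(e)\langle T,m'\rangle=\langle T,m'\rangle$ for every $i$, we conclude $\langle T,m'\rangle=\langle S,m'\rangle=m(T)\langle I,m'\rangle=m(T)$, so $m'=m$.

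I expect the main obstacle to be the two structural inputs feeding the second paragraph: the construction of almost norm-one bump functions with arbitrarily small support, and, above all, the fact that singletons are Ditkin sets for $A_\Phi(G)$ --- this is where the Orlicz-space analysis and the results of \cite{LK1} genuinely have to be used. Once these are in hand the remainder is soft: the conceptual point is simply that weak compactness of $u\mapsto u.T$ is exactly what forces the net $(u_i.T)$ to cluster at the scalar multiple $m(T)I$ of the identity, which simultaneously yields the mean, its invariance, its uniqueness, and its value $\mu(\{e\})$ on measures.
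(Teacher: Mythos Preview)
Your argument is correct, and the core of the uniqueness step---upgrading the weak* convergence $u_i.T\to m(T)I$ to weak convergence via the defining weak compactness, then applying $m'$---is exactly the mechanism the paper uses. Where you genuinely diverge is in the existence step and in how invariance is obtained. The paper does not build $m$ from bump functions: it simply invokes \cite[Corollary~6.2]{LK1} to pick an $m\in TIM_\Psi(\widehat{G})$ on all of $PM_\Psi(G)$ and restricts it to $WAP_\Psi(\widehat{G})$; invariance is then free, and the net approximating $m$ is produced abstractly by Goldstine's theorem rather than by hand. With invariance already in hand, the identity $\langle u,u_\alpha.T\rangle=\langle u_\alpha,u.T\rangle\to\langle u.T,m\rangle=u(e)\langle T,m\rangle$ falls out immediately, with no need for your localisation estimate or the Ditkin property of $\{e\}$. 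The concrete bump functions from \cite[Proposition~5.5]{LK1} enter the paper's proof only at the very end, to compute $m(\mu)=\mu(\{e\})$ via dominated convergence, much as you do.

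What your approach buys is self-containment: you never appeal to the existence of a TIM on the full dual, and you make the approximating net explicit from the start. The price is the extra structural input you flag yourself---that functions in $A_\Phi(G)$ vanishing at $e$ can be approximated by functions vanishing on a neighbourhood of $e$---which the paper's route avoids entirely by importing invariance from \cite{LK1}. Both arguments compute $m(\mu)$ the same way.
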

\begin{proof} 
(i) By \cite[Corollary 6.2]{LK1}, $TIM_\Psi(\widehat{G})$ is non-empty. Let $m\in TIM_\Psi(\widehat{G}).$ Now, $m$ restricted to $WAP_\Psi(\widehat{G})$ is a topological invariant mean on $WAP_\Psi(\widehat{G}).$ This proves the existence of an invariant mean on $WAP_\Psi(\widehat{G}).$

(ii) Let $T\in WAP_\Psi(\widehat{G}).$ Let $m$ be the invariant mean provided by (i). By Goldstine's theorem, there exists a net $\{u_\alpha\}\subset A_\Phi(G)$ such that $\|u_\alpha\|_{A_\Phi}\leq 1$ and $u_\alpha\rightarrow m$ in the weak*-topology. In particular, $u_\alpha(e)\rightarrow 1.$ For any $u\in A_\Phi(G),$ we have 
\begin{align*}
<u,u_\alpha.T>=&<uu_\alpha, T> = <u_\alpha,u.T>\\ \rightarrow& <u.T, m> = u(e)<T,m> \\ =& <T,m> I(u).
\end{align*}
Thus, $u_\alpha.T\rightarrow <T,m>I$ in the weak*-topology. On the other hand, as $T\in WAP_\Psi(\widehat{G}),$ it follows that there exists a subnet $\{u_{\alpha_\beta}\}$ of $\{u_\alpha\}$ such that $u_{\alpha_\beta}.T$ converges in the weak topology of $PM_\Psi(G).$ Since weak convergence implies weak* convergence, it follows that $$u_{\alpha_\beta}.T\rightarrow <T,m>I$$ in the weak topology.

(iii) We now prove the uniqueness. Let $m^\prime$ be any topological invariant mean on $WAP_\Psi(\widehat{G})$ and let $m$ be the invariant mean provided by (i). Let $T\in WAP_\Psi(\widehat{G}).$ By (ii), there exists a net $\{u_\alpha\}$ in the unit ball of $A_\Phi(G)$ such that $u_\alpha\rightarrow m$ in the weak*-topology. Further, there exists a subnet $\{u_{\alpha_\beta}\}$ of $\{u_\alpha\}$ such that $u_{\alpha_\beta}.T\rightarrow <T,m>I$ in the weak-topology. In particular, $$<u_{\alpha_\beta}.T,m^\prime>\rightarrow<<T,m>I,m^\prime>=<T,m>.$$ Also, it follows from (ii) that, $$<u_{\alpha_\beta}.T,m^\prime>=u_{\alpha_\beta}(e)<T,m^\prime>\rightarrow<T,m^\prime>.$$ Thus $m^\prime$ coincides with $m$ restricted to $WAP_\Psi(\widehat{G}).$ This proves the uniqueness of the invariant mean. 

(iv) Let $m$ be the unique topological invariant mean provided by (iii). We now claim that $<\mu,m>=\mu(\{e\})$ for all $\mu\in M(G).$ Note that, it is enough to prove for positive $\mu\in M(G).$ Let $0\leq \mu\in M(G).$ Let $\{U_n\}_{n\in\mathbb{N}}$ be a sequence of neighbourhoods of $e$ in $G$ such that $\overline{U_n}$ is compact, $U_{n+1}^2\subset U_n$ and $\mu(U_n)\rightarrow\mu(\{e\}).$ For each $n\in\mathbb{N},$ let $u_n$ be the function provided by \cite[Proposition 5.5]{LK1}, corresponding to the neighbourhood $U_n.$ Note that $u_n\rightarrow \chi_{U},$ where $V=\underset{n\in\mathbb{N}}{\cap}U_n.$ As $\chi_U=\chi_{\{e\}},\ \mu\ a.e.,$ it follows that $u_n\rightarrow \chi_{\{e\}},\mu\ a.e..$ Thus, for any $u\in A_\Phi(G),$ we have, $u_n.u\rightarrow u(e)\chi_{\{e\}},\mu\ a.e.$ and hence, by dominated convergence theorem, 
\begin{align*}
<u,u_n.\mu> = <u_nu,\mu> = \int_G u_nu\ d\mu \rightarrow \int_G u(e)\chi_{\{e\}}\ d\mu = <u,\mu(\{e\})\delta_e>,
\end{align*} 
i.e., $u_n.\mu\rightarrow \mu(\{e\})\delta_e$ in the weak*-topology of $PM_\Psi(G).$ As $u_n$'s are bounded, and $\mu\in WAP_\Psi(\widehat{G}),$ there exists a subsequence $\{u_{n_k}\}$ of $\{u_n\}$ such that $u_{n_k}.\mu$ converges weakly and hence in the weak*-topology. Thus, $u_{n_k}.\mu\rightarrow \mu(\{e\})\delta_e$ in the weak topology. As, $m$ is a topological invariant mean, it follows that $m(\mu)=m(u_n.\mu)$ for all $n.$ Thus, 
\begin{align*}
m(\mu)=&m(\mu(\{e\})\delta_e)=\mu(\{e\})m(\delta_e)=\mu(\{e\}). \qedhere
\end{align*}
\end{proof}

\section{Uniformly continuous functionals}

\begin{defn}
We define the following subspaces of $PM_\Psi(G):$
\begin{enumerate}
\item $\mathscr{M}(\widehat{G}):=\overline{M(G)};$
\item $\mathscr{M}^d(\widehat{G}):=\overline{\ell^1(G)};$
\item $PF_\Psi(G):=\overline{L^1(G)};$
\item $UCB_\Psi(\widehat{G}):=\overline{\{A_\Phi(G).PM_\Psi(G)\}};$
\item $C_\Psi(\widehat{G}):=\left\{T\in PM_\Psi(G):\mbox{if }S_1,S_2\in PF_\Psi(G),\mbox{ then }S_1T+TS_2\in PF_\Psi(G)\right\};$
\end{enumerate}
where bar denotes the closure in $\|\cdot\|_{PM_\Psi(G)}$-norm.
\end{defn}

\begin{prop}\label{PTC}
Let $\{u_n\}\subset B_\Phi(G).$ Then $u_n\rightarrow \chi_{\{e\}}$ in the pointwise topology if and only if $\langle u_n,T\rangle$ converges to $\langle T,m\rangle,$ for every $T\in \mathscr{M}^d(\widehat{G}).$
\end{prop}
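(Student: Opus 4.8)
The plan is to prove the two implications separately; the backward one is immediate and the forward one I would get from a weak$^*$‑compactness argument after reducing to point masses.

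\emph{Preliminary observations.} First I would record the following. Since $\delta_x\in\ell^1(G)\subseteq AP_\Psi(\widehat{G})\subseteq WAP_\Psi(\widehat{G})$ and $WAP_\Psi(\widehat{G})$ is norm closed (Lemma~\ref{WAP_Prop}), we have $\mathscr{M}^d(\widehat{G})=\overline{\ell^1(G)}\subseteq WAP_\Psi(\widehat{G})$, so the invariant mean $m$ of Theorem~\ref{WAP_Inv_Mean} restricts to a bounded functional on $\mathscr{M}^d(\widehat{G})$ and, by that theorem, $\langle\delta_x,m\rangle=\delta_x(\{e\})=\chi_{\{e\}}(x)$ for every $x\in G$. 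Next, for $u\in B_\Phi(G)$ the module action gives $u.\delta_x=u(x)\delta_x$ at once; choosing $v\in A_\Phi(G)$ with $v(x)=1$ (regularity of $A_\Phi(G)$, \cite[Corollary 3.8]{LK1}) we get $\delta_x=v.\delta_x\in A_\Phi(G).PM_\Psi(G)\subseteq UCB_\Psi(\widehat{G})$, so that $\mathscr{M}^d(\widehat{G})\subseteq UCB_\Psi(\widehat{G})$, and for the natural pairing of $B_\Phi(G)$ with $UCB_\Psi(\widehat{G})$ (determined by $\langle u,v.S\rangle=\langle uv,S\rangle$) we obtain $\langle u,\delta_x\rangle=\langle uv,\delta_x\rangle=u(x)$. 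Finally, the finitely supported measures form a norm‑dense subspace of $\mathscr{M}^d(\widehat{G})$, since they are $\|\cdot\|_{M(G)}$‑dense in $\ell^1(G)$ and $\|\cdot\|_{PM_\Psi(G)}\le\|\cdot\|_{M(G)}$; in particular $\{\delta_x:x\in G\}$ is total in $\mathscr{M}^d(\widehat{G})$. With these in hand, ``$u_n\to\chi_{\{e\}}$ pointwise'' is literally the statement ``$\langle u_n,\delta_x\rangle\to\langle\delta_x,m\rangle$ for all $x\in G$''.

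\emph{($\Leftarrow$).} Since $\delta_x\in\mathscr{M}^d(\widehat{G})$, applying the hypothesis to $T=\delta_x$ yields $u_n(x)=\langle u_n,\delta_x\rangle\to\langle\delta_x,m\rangle=\chi_{\{e\}}(x)$ for every $x\in G$, i.e. $u_n\to\chi_{\{e\}}$ pointwise.

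\emph{($\Rightarrow$).} Assume $u_n\to\chi_{\{e\}}$ pointwise. For a finitely supported $T=\sum_{i=1}^N c_i\delta_{x_i}$, linearity and pointwise convergence give $\langle u_n,T\rangle=\sum_i c_i u_n(x_i)\to\sum_i c_i\chi_{\{e\}}(x_i)=\langle T,m\rangle$. For general $T\in\mathscr{M}^d(\widehat{G})$ I would view $\{u_n\}$ as a bounded subset of the dual of $\mathscr{M}^d(\widehat{G})$, hence (Banach--Alaoglu) contained in a weak$^*$‑compact set. Any weak$^*$‑cluster point $\varphi$ of $\{u_n\}$ satisfies $\langle\varphi,\delta_x\rangle=\chi_{\{e\}}(x)=\langle\delta_x,m\rangle$ for all $x$ (because $\langle u_n,\delta_x\rangle\to\chi_{\{e\}}(x)$), so $\varphi$ and $m$ agree on the total set $\{\delta_x\}$ and therefore $\varphi=m$ on $\mathscr{M}^d(\widehat{G})$. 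Thus $m$ is the unique weak$^*$‑cluster point of $\{u_n\}$, so $u_n\to m$ in the weak$^*$‑topology of $\mathscr{M}^d(\widehat{G})'$, which is precisely the assertion that $\langle u_n,T\rangle\to\langle T,m\rangle$ for all $T\in\mathscr{M}^d(\widehat{G})$.

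\emph{Main obstacle.} The point that requires care is the uniform bound $\sup_n\|u_n\|_{B_\Phi(G)}<\infty$ used in the forward implication (to invoke weak$^*$‑compactness, or equivalently to control a tail estimate $|\langle u_n,T-T_k\rangle|\le\|u_n\|\,\|T-T_k\|_{PM_\Psi(G)}$ against a finitely supported approximant $T_k$): pointwise convergence of continuous functions to $\chi_{\{e\}}$ does not by itself force this when $G$ is non‑discrete, so the forward direction should be read with $\{u_n\}$ norm bounded in $B_\Phi(G)$, just as in Granirer's analogue for the Fourier algebra. Once that bound is available the rest is formal, using only the module structure and the value of $m$ on point masses.
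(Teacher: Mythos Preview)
Your proof is correct and follows the same reduction as the paper's: establish $\langle u_n,\delta_x\rangle=u_n(x)$ and $\langle\delta_x,m\rangle=\chi_{\{e\}}(x)$, so that the equivalence is immediate on point masses, and then pass to $\mathscr{M}^d(\widehat{G})$ by density. The differences are minor but worth noting. For the value $\langle\delta_x,m\rangle$, you simply read off $m(\mu)=\mu(\{e\})$ from Theorem~\ref{WAP_Inv_Mean}, whereas the paper recomputes it from scratch using invariance: for $x\neq e$ one picks $u\in A_\Phi(G)$ with $u(e)=0$ and $u(x)\neq 0$, and then $u(x)\langle\delta_x,m\rangle=\langle u\cdot\delta_x,m\rangle=u(e)\langle\delta_x,m\rangle=0$. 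Your route is shorter and avoids duplicating work. After that the paper just writes ``the proof follows from these equalities'' and stops; you make the passage to general $T\in\mathscr{M}^d(\widehat{G})$ explicit via density of finitely supported measures together with a Banach--Alaoglu argument, and in the process you correctly isolate the hidden hypothesis $\sup_n\|u_n\|_{B_\Phi}<\infty$ that is needed for the forward implication but that the paper leaves tacit.
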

\begin{proof}
First, let $x\neq e.$ Choose $u\in A_\Phi(G)$ such that $u(e)=0$ and $u(x)\neq 0.$ Note that
\begin{align*}
\langle \delta_x,m \rangle =& u(x)\langle \delta_x,m\rangle \\ =& \langle u(x)\delta_x,m\rangle\\ =& \langle u\cdot\delta_x,m\rangle\\ =& u(e)\langle \delta_x,m\rangle = 0. 
\end{align*}
Thus, $$\langle \delta_x,m \rangle=\left\{\begin{array}{ccc}
1 & \mbox{if} & x=e \\
0 & \mbox{if} & x\neq e
\end{array}\right..$$ Further, observe that, for every $x\in G,$ $\langle u_n,\delta_x \rangle = u_n(x).$ Hence the proof follows from these equalities.
\end{proof}

\begin{defn}
The support of an operator $T\in CV_\Psi(G),$ denoted $\mbox{supp}(T),$ is defined as the set of all $x\in G$ such that for every open set $U$ containing $e$ and for every open set $V$ containing $x$ there exists $f,g\in C_c(G)$ such that $\mbox{supp}(f)\subset U,$ $\mbox{supp}(g)\subset V$ and $\langle T(f),g \rangle\neq 0.$ 
\end{defn}

Since $A_\Phi(G)$ is a regular Tauberian algebra and also as the canonical inclusion of $PM_\Psi(G)$ inside $CV_\Psi(G)$ is an $A_\Phi(G)$-module map, we have the following corollary.
\begin{cor}\label{equal}
If $T\in PM_\Psi(G),$ then the support of $T$ as an element of the dual of $A_\Phi(G)$ coincides with the support of $T$ as an element of $CV_\Psi(G).$
\end{cor}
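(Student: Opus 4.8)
The plan is to show that each of the two notions of support is characterized by the same local vanishing condition, so the equality is forced. Recall that for $T \in PM_\Psi(G) \subset A_\Phi(G)'$, a point $x \in G$ lies \emph{outside} the support of $T$ (in the dual-of-$A_\Phi(G)$ sense) precisely when there is $u \in A_\Phi(G)$ with $u(x) \neq 0$ and $u \cdot T = 0$; equivalently, since $A_\Phi(G)$ is regular, when $\langle u, T\rangle = 0$ for every $u \in A_\Phi(G)$ supported in some neighbourhood of $x$. First I would record this standard reformulation, citing the regularity and Tauberian properties of $A_\Phi(G)$ established in \cite{LK1} (Corollary 3.8 and the surrounding discussion): the complement of $\operatorname{supp}(T)$ in the dual sense is the largest open set $U$ such that $\langle u, T\rangle = 0$ whenever $u \in A_\Phi(G)$ has $\operatorname{supp}(u) \subset U$.

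Next I would unwind the $CV_\Psi(G)$-definition of support. A point $x$ lies outside $\operatorname{supp}(T)$ in the $CV_\Psi$ sense iff there are open sets $U \ni e$ and $V \ni x$ with $\langle T(f), g\rangle = 0$ for all $f, g \in C_c(G)$ having $\operatorname{supp}(f) \subset U$, $\operatorname{supp}(g) \subset V$. Using the convolution formula $\langle T(f), g\rangle = \langle g * \check f, T\rangle$ — valid because, under the identification of $A_\Phi(G)'$ with $PM_\Psi(G)$, the pairing of $T$ with the elementary tensor $f \otimes g$ is exactly $\langle T(f), g\rangle$, and the module map $f \otimes g \mapsto f * \check g$ is the canonical surjection onto $A_\Phi(G)$ — this condition becomes: $\langle w, T\rangle = 0$ for all $w$ of the form $g * \check f$ with $f, g$ as above. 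As $U$ shrinks to $\{e\}$, the functions $g * \check f$ approximate translates/localizations of $g$, and one checks (again via regularity and the density of $C_c$-convolutions in $A_\Phi(G)$, Theorem \ref{app_id}) that this is equivalent to $\langle w, T\rangle = 0$ for all $w \in A_\Phi(G)$ supported near $x$. Hence the two complements-of-support describe the same open set, and the corollary follows.

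The main obstacle will be the passage from the $f, g \in C_c(G)$ formulation to a genuine statement about all $u \in A_\Phi(G)$ supported in a neighbourhood of $x$ — i.e., checking that letting $\operatorname{supp}(f)$ shrink toward $e$ really does let $g * \check f$ sweep out a neighbourhood basis (in the $A_\Phi$-norm sense) of the localizations of arbitrary $A_\Phi(G)$-functions at $x$. This is where Theorem \ref{app_id} and the $\Delta_2$-hypotheses genuinely enter: one wants that for a suitable approximate identity $\{f_\alpha\}$ in $C_c(G)$, the operators $w \mapsto w * \check f_\alpha$ converge to the identity on $A_\Phi(G)$ in an appropriate sense, so that a function $w$ vanishing near $x$ forces the approximants $g * \check f_\alpha$ (with $g$ a small bump at $x$) to have pairing with $T$ tending to $\langle w, T\rangle$. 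Once this localization lemma is in hand the remainder is a formal matching of the two open sets, with no further difficulty.
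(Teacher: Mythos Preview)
Your approach is correct and is essentially an unpacking of the paper's one-line justification. The paper's entire argument is the sentence preceding the corollary: it invokes (a) that $A_\Phi(G)$ is a regular Tauberian algebra and (b) that the inclusion $PM_\Psi(G)\hookrightarrow CV_\Psi(G)$ is an $A_\Phi(G)$-module map. Your key identity $\langle T(f),g\rangle=\langle g*\check f,T\rangle$ is precisely the concrete content of (b), since the inclusion is dual to the module surjection $f\otimes g\mapsto f*\check g$; and the regularity/Tauberian property is exactly what you use to pass between ``vanishing on all $u$ supported near $x$'' and ``vanishing on all $g*\check f$ with $f,g$ having small supports.'' So the two proofs coincide in substance, with the paper stating the structural reason and you spelling out the computation.

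One minor remark: the direction ``$x\notin\mathrm{supp}_{A_\Phi'}(T)\Rightarrow x\notin\mathrm{supp}_{CV_\Psi}(T)$'' is immediate from the pairing formula alone (choose $V\ni x$, $U\ni e$ with $VU^{-1}$ inside the vanishing neighbourhood, so that $g*\check f$ is supported there). Your anticipated obstacle lies only in the converse direction, and there the Tauberian property combined with regularity (multiply by a cutoff $v\in A_\Phi(G)$ with $v\equiv 1$ near $x$ and $\mathrm{supp}(v)\subset V$) handles the localization without needing the full strength of Theorem~\ref{app_id}; the approximate-identity argument you sketch also works but is slightly heavier than necessary.
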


For $f\in L^1(G),$ define $T_f:L^\Phi(G)\rightarrow L^\Phi(G)$ by $T_f(g)=f*g.$ It is clear that $T_f\in CV_\Phi(G),$ for all $f\in L^1(G).$

An immediate consequence of Theorem \ref{app_id} is the following Lemma. For a proof of this see \cite[Proposition 9]{H}, where the case $\Phi(x)=|x|^p,$ $1<p<\infty,$ is considered.
\begin{lem}\label{App}
Let $T\in CV_\Phi(G)$ be such that $\mbox{supp}(T)$ is compact. Then for each open set $U$ containing $\mbox{supp}(T),$ there exists a net $f_\alpha\in C_c(G)$ with $\mbox{supp}(f_\alpha)\subset U$ such that $T_{f_\alpha}$ converges to $T$ in the ultrastrong topology of $\mathcal{B}(L^\Phi(G)).$
\end{lem}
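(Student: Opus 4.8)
The plan is to reduce the statement to a density/approximation argument using Theorem~\ref{app_id}. First I would set up the key objects: since $\mathrm{supp}(T)$ is compact and $U\supset\mathrm{supp}(T)$ is open, I can pick a relatively compact open set $W$ with $\mathrm{supp}(T)\subset W\subset\overline{W}\subset U$. Let $\{e_\alpha\}$ be a bounded approximate identity for $L^1(G)$ consisting of functions in $C_c(G)$ whose supports shrink to $\{e\}$ (a standard construction). Define $f_\alpha := h\cdot(e_\alpha * \widetilde{h})$ or, more directly, cut off: take $\psi\in C_c(G)$ with $\psi\equiv 1$ on a neighbourhood of $\mathrm{supp}(T)$ and $\mathrm{supp}(\psi)\subset W$, and set $f_\alpha$ to be an appropriately localised piece of $e_\alpha * (\text{something adapted to }T)$. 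The cleanest route: since $A_\Phi(G)$ is regular and Tauberian and $\mathrm{supp}(T)$ is compact, one shows $u\cdot T = T$ for any $u\in A_\Phi(G)$ that is $\equiv 1$ on a neighbourhood of $\mathrm{supp}(T)$; this together with Corollary~\ref{equal} pins down $T$ locally.

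The core computation is this: for $g\in L^\Phi(G)$ and $e_\alpha$ the b.a.i., we have by Theorem~\ref{app_id} that $g*e_\alpha\to g$ in $\|\cdot\|_\Phi$, hence $T(g*e_\alpha)\to T(g)$. Now $T(g*e_\alpha) = T(g)*e_\alpha$ is \emph{not} quite what we want — we want $T_{f_\alpha}(g)=f_\alpha*g$. So instead I would use the approximate identity on the left: consider $e_\alpha*T$ in the sense that $(e_\alpha*T)(g) = e_\alpha*(T(g))$, which lies in the closure of $\{T_f : f\in L^1\}$-type operators, and show $e_\alpha*(T(g))\to T(g)$ in $L^\Phi$ by applying Theorem~\ref{app_id} again (left-convolution version, which holds by the analogous argument since left translation is also norm-continuous on $L^\Phi(G)$ under $\Delta_2$). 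Then $e_\alpha * T$ converges to $T$ in the ultrastrong topology. Finally, to get the support condition, replace $e_\alpha$ by $f_\alpha\in C_c(G)$ with $\mathrm{supp}(f_\alpha)\subset U$: write $f_\alpha = $ a localisation obtained by multiplying in $A_\Phi(G)$ by a cutoff $u\in A_\Phi(G)$ supported in $U$ and equal to $1$ near $\mathrm{supp}(T)$, using that $u\cdot(e_\alpha * T) = e_\alpha*(u\cdot T)$ and $u\cdot T=T$, so $u\cdot(e_\alpha*T) = e_\alpha * T$ already has the right limit, and $u\cdot(e_\alpha*T)$ is (approximately) of the form $T_{f_\alpha}$ with $f_\alpha$ supported in a small neighbourhood of $\mathrm{supp}(e_\alpha)\cdot(\text{something})$, which can be forced inside $U$ by shrinking the supports of $e_\alpha$.

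The main obstacle I expect is the bookkeeping around \emph{which} operator is being approximated and in \emph{which} topology, and making precise that the convolution $e_\alpha * T$ (or its $C_c$-localisation) is genuinely of the form $T_{f_\alpha}$ for some $f_\alpha\in C_c(G)$ with support inside $U$ — the identity $e_\alpha*T = T_{f_\alpha}$ is not literally true; rather one needs $e_\alpha * T$ to be a limit of such operators, or to use that $T$ restricted near its compact support is itself a measure-like object so that $e_\alpha*T$ is honestly $T_{e_\alpha*\mu}$-type. Here I would lean on the structure established earlier: $PM_\Phi(G)$ is the ultraweak closure of $\{T_\mu\}$, and for $T$ with compact support the cutoff $u\cdot T$ can be approximated in the relevant topology by $T_\mu$ with $\mathrm{supp}(\mu)$ close to $\mathrm{supp}(T)$; convolving such $T_\mu$ on the left by $e_\alpha\in C_c(G)$ gives $T_{e_\alpha*\mu}$ with $e_\alpha*\mu\in C_c(G)$ and support inside $U$ once $\mathrm{supp}(e_\alpha)$ is small. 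Passing to a diagonal net and invoking Theorem~\ref{app_id} to control the ultrastrong convergence then finishes the argument. The second delicate point is verifying the left-handed version of Theorem~\ref{app_id}, i.e. $\|e_\alpha * g - g\|_\Phi\to 0$; this follows from density of $C_c(G)$ in $L^\Phi(G)$ (valid under $\Delta_2$), the estimate $\|e_\alpha*g - g\|_\Phi \le \|e_\alpha\|_1\|g - g'\|_\Phi + \|e_\alpha*g' - g'\|_\Phi + \|g'-g\|_\Phi$ for $g'\in C_c(G)$, and uniform continuity of $g'$, exactly parallel to the cited Proposition~9 of \cite{H}.
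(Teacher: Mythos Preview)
Your proposal has a genuine gap at the point you yourself flag: neither of the operators you build is of the form $T_{f_\alpha}$. Both $g\mapsto T(g)*e_\alpha$ and $g\mapsto e_\alpha*T(g)$ converge ultrastrongly to $T$ by Theorem~\ref{app_id} (or its left variant), but they are compositions $R_{e_\alpha}\circ T$ and $L_{e_\alpha}\circ T$, not left convolutions by a single $C_c$ function. Your attempted repair via the $PM_\Phi$ description fails for two reasons: the Lemma is stated for $T\in CV_\Phi(G)$, which is not known to coincide with $PM_\Phi(G)$; and even for $T\in PM_\Phi(G)$ you only have \emph{ultraweak} approximation by $T_\mu$'s, which does not by itself upgrade to the ultrastrong approximation you need.

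The construction you are missing, and the one Herz uses in \cite[Proposition~9]{H} (to which the paper simply defers, invoking Theorem~\ref{app_id} as the only new ingredient), is to apply $T$ \emph{to} the approximate identity rather than to its output: set $f_\alpha := T(e_\alpha)$. Since $e_\alpha\in C_c(G)\subset L^\Phi(G)$, this is well defined; one checks from the definition of $\mathrm{supp}(T)$ that $\mathrm{supp}(f_\alpha)\subset \mathrm{supp}(T)\cdot\mathrm{supp}(e_\alpha)\subset U$ once $\mathrm{supp}(e_\alpha)$ is small. The defining property of $CV_\Phi(G)$ then gives, for $g\in C_c(G)\subset L^1(G)$,
\[
T_{f_\alpha}(g)=T(e_\alpha)*g=T(e_\alpha * g)\longrightarrow T(g)
\]
in $L^\Phi$, the last step by the left-sided analogue of Theorem~\ref{app_id} that you correctly sketch. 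Uniform boundedness $\|T_{f_\alpha}\|\le \|T\|\sup_\alpha\|e_\alpha\|_1$ extends this to all $g\in L^\Phi(G)$. A further mollification (replacing $e_\alpha$ by $e_\alpha*e_\alpha$, say) makes $f_\alpha$ continuous, hence in $C_c(G)$. The essential identity $T(e_\alpha)*g=T(e_\alpha*g)$ is exactly the step absent from your outline.
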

\begin{lem}
Let $T,S\in PM_\Psi(G)$ be such that their supports are compact. Then $\mbox{supp}(TS)\subset\mbox{supp}(T)\mbox{supp}(S).$
\end{lem}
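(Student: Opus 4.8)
The plan is to reduce the statement to a local computation via the approximation result in Lemma \ref{App}, exploiting the fact that convolution operators associated to $C_c(G)$-functions have support contained in the support of the function. First I would fix a point $x \notin \mbox{supp}(T)\,\mbox{supp}(S)$ and show $x \notin \mbox{supp}(TS)$; since $\mbox{supp}(T)$ and $\mbox{supp}(S)$ are compact, their product $\mbox{supp}(T)\,\mbox{supp}(S)$ is compact, hence closed, so there exist open sets $U \supset \mbox{supp}(T)$ and $V \supset \mbox{supp}(S)$ (with compact closures) together with an open neighbourhood $W$ of $x$ such that $\overline{U}\,\overline{V} \cap W = \emptyset$; this uses the continuity of multiplication in $G$ and a standard separation argument. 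Here I am implicitly using Corollary \ref{equal} to pass freely between the support of an element of $PM_\Psi(G)$ computed in the dual of $A_\Phi(G)$ and the support computed in $CV_\Psi(G)$, so that the support condition defining membership in $\mbox{supp}(TS)$ via test functions $f,g\in C_c(G)$ is the relevant one.

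Next I would invoke Lemma \ref{App}: since $\mbox{supp}(T)$ is compact and contained in $U$, there is a net $f_\alpha \in C_c(G)$ with $\mbox{supp}(f_\alpha)\subset U$ such that $T_{f_\alpha} \to T$ ultrastrongly in $\mathcal{B}(L^\Psi(G))$; similarly a net $g_\gamma \in C_c(G)$ with $\mbox{supp}(g_\gamma)\subset V$ with $T_{g_\gamma}\to S$ ultrastrongly. Multiplication of operators is separately ultrastrongly continuous on bounded sets, and the relevant nets are bounded (their limits $T,S$ are bounded and one may truncate), so $T_{f_\alpha}T_{g_\gamma} = T_{f_\alpha * g_\gamma} \to TS$ in an appropriate topology — at worst I pass to iterated limits, first in $\gamma$ then in $\alpha$. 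The key point is that $f_\alpha * g_\gamma \in C_c(G)$ with $\mbox{supp}(f_\alpha * g_\gamma) \subset \mbox{supp}(f_\alpha)\,\mbox{supp}(g_\gamma) \subset \overline{U}\,\overline{V}$, which is disjoint from $W$. Consequently, for any $h,k \in C_c(G)$ with $\mbox{supp}(h)$ contained in a fixed small neighbourhood of $e$ and $\mbox{supp}(k)\subset W$, one has $\langle T_{f_\alpha * g_\gamma}(h), k\rangle = \langle (f_\alpha * g_\gamma) * h, k\rangle$, and the support of $(f_\alpha * g_\gamma)*h$ stays away from $W$ once the neighbourhood of $e$ is small enough, so this pairing vanishes identically. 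Passing to the limit gives $\langle TS(h), k\rangle = 0$ for all such $h,k$, which is exactly the statement that $x \notin \mbox{supp}(TS)$.

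The main obstacle I anticipate is the topological bookkeeping around the convergence $T_{f_\alpha * g_\gamma} \to TS$: one must be careful that the product of two ultrastrongly convergent bounded nets converges in a topology strong enough to control the pairings $\langle \cdot\, (h), k\rangle$ against fixed $h, k \in C_c(G)$, yet weak enough to be guaranteed by Lemma \ref{App}. The cleanest route is to observe that ultrastrong convergence implies ultraweak convergence, that $PM_\Psi(G)$ is ultraweakly closed, and that the pairing $\langle R(h), k\rangle$ for fixed $h \in L^\Psi(G)$, $k \in L^\Phi(G)$ is ultraweakly continuous in $R$; combined with the identity $T_{f_\alpha}T_{g_\gamma} = T_{f_\alpha * g_\gamma}$ and a double-limit argument, this suffices. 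A secondary point to handle carefully is that $\mbox{supp}((f_\alpha * g_\gamma) * h)$ must be pushed off $W$, which forces the neighbourhood of $e$ supporting $h$ to depend only on the fixed gap between $\overline{U}\,\overline{V}$ and $W$, not on $\alpha$ or $\gamma$ — this is fine since that gap is fixed once and for all at the start.
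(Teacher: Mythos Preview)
Your proposal is correct and follows essentially the same route as the paper: the paper's proof simply cites Corollary~\ref{equal}, Lemma~\ref{App}, and \cite[Lemma 5.2]{LK3}, and your argument is precisely an unpacking of how those ingredients combine --- pass to the $CV_\Psi$-support via Corollary~\ref{equal}, approximate $T$ and $S$ ultrastrongly by convolution operators $T_{f_\alpha}$, $T_{g_\gamma}$ with controlled supports via Lemma~\ref{App}, and use $\mbox{supp}(f_\alpha * g_\gamma)\subset \mbox{supp}(f_\alpha)\,\mbox{supp}(g_\gamma)$ to conclude. One minor remark: your worry about boundedness of the approximating nets is unnecessary, since the iterated-limit argument you describe (first $\gamma$, then $\alpha$) only uses that each individual $T_{f_\alpha}$ is bounded and that $T_{g_\gamma}(h)\to S(h)$ in norm for fixed $h$, neither of which requires uniform bounds on the nets.
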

\begin{proof}
The proof of this lemma follows from Corollary \ref{equal}, Lemma \ref{App} and \cite[Lemma 5.2]{LK3}.
\end{proof}
Based on Lemma \ref{App} and Corollary \ref{equal}, we have the following easy corollary.
\begin{cor}\label{CSD}
The set $UCB_\Psi(\widehat{G})$ is a closed linear subspace of $PM_\Psi(G)$ and coincides with the norm closure of $\{T\in PM_\Psi(G):T\mbox{ has compact support}\}.$
\end{cor}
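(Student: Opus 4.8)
The plan is to prove the two assertions of Corollary~\ref{CSD} in sequence, deriving both from Lemma~\ref{App} and Corollary~\ref{equal} together with a density argument. Write $\mathcal{C}$ for the set $\{T\in PM_\Psi(G):T\text{ has compact support}\}$. The strategy is first to show $\mathcal{C}\subset UCB_\Psi(\widehat G)$ and that $\mathcal{C}$ is dense in $UCB_\Psi(\widehat G)$, which together give $\overline{\mathcal{C}}=UCB_\Psi(\widehat G)$; the fact that $UCB_\Psi(\widehat G)$ is a closed linear subspace is then immediate since it is by definition a norm closure of a linear subspace (the module product $A_\Phi(G)\cdot PM_\Psi(G)$ spans a subspace).

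First I would show that every $T\in\mathcal{C}$ lies in $UCB_\Psi(\widehat G)$. Given $T$ with $\mathrm{supp}(T)$ compact, pick a relatively compact open set $U\supset\mathrm{supp}(T)$ and then $u\in A_\Phi(G)$ with $u\equiv 1$ on a neighbourhood of $\mathrm{supp}(T)$ and $\mathrm{supp}(u)\subset U$; such a $u$ exists because $A_\Phi(G)$ is a regular algebra (its spectrum is $G$, see \cite[Corollary 3.8]{LK1}). Then $u\cdot T$ and $T$ agree outside $U$ and on the interior where $u=1$ they agree as well; using Corollary~\ref{equal} to identify the two notions of support and the definition of the module action, one checks $\langle v,u\cdot T\rangle=\langle uv,T\rangle=\langle v,T\rangle$ for all $v\in A_\Phi(G)$ by a localisation argument on the support of $T$, hence $u\cdot T=T$. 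Thus $T\in A_\Phi(G)\cdot PM_\Psi(G)\subset UCB_\Psi(\widehat G)$, giving $\overline{\mathcal{C}}\subset UCB_\Psi(\widehat G)$.

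Next I would prove the reverse inclusion by showing $A_\Phi(G)\cdot PM_\Psi(G)\subset\overline{\mathcal{C}}$; since $\overline{\mathcal{C}}$ is closed this yields $UCB_\Psi(\widehat G)=\overline{\{A_\Phi(G)\cdot PM_\Psi(G)\}}\subset\overline{\mathcal{C}}$. Fix $u\in A_\Phi(G)$ and $S\in PM_\Psi(G)$. The element $u$ can be approximated in $A_\Phi(G)$-norm by compactly supported elements $u_k$ of $A_\Phi(G)$ (this uses the Tauberian property: functions of compact support are dense in $A_\Phi(G)$). Then $u_k\cdot S\to u\cdot S$ in $\|\cdot\|_{PM_\Psi(G)}$ because the module action is contractive, $\|u_k\cdot S-u\cdot S\|\le\|u_k-u\|_{A_\Phi}\|S\|$. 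It remains to see that each $u_k\cdot S$ has compact support: by Corollary~\ref{equal} the support of $u_k\cdot S$ as a functional on $A_\Phi(G)$ is contained in $\mathrm{supp}(u_k)$, which is compact, because $\langle v,u_k\cdot S\rangle=\langle u_k v,S\rangle$ vanishes whenever $v$ vanishes on a neighbourhood of $\mathrm{supp}(u_k)$. Hence $u_k\cdot S\in\mathcal{C}$ and $u\cdot S\in\overline{\mathcal{C}}$.

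The main obstacle I anticipate is the careful justification in the first inclusion that $u\cdot T=T$ when $u=1$ near $\mathrm{supp}(T)$: this is a localisation statement that one must phrase entirely in terms of the duality between $A_\Phi(G)$ and $PM_\Psi(G)$, and it is precisely here that the regularity and Tauberian hypotheses, together with Corollary~\ref{equal}, do the work — one reduces to testing against $v\in A_\Phi(G)$ supported off $\mathrm{supp}(T)$ (where both sides vanish) and against $v$ with $uv=v$ near $\mathrm{supp}(T)$, then invokes that $A_\Phi(G)$ localises at points of its spectrum. Everything else is a routine combination of contractivity of the module action and the identification of supports.
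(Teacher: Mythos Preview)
Your argument is correct and is precisely the natural fleshing-out of what the paper leaves implicit: one inclusion uses regularity of $A_\Phi(G)$ to produce $u$ with $u\equiv 1$ near $\mathrm{supp}(T)$ and the support calculus (via Corollary~\ref{equal}) to get $u\cdot T=T$, while the other inclusion uses the Tauberian property and $\mathrm{supp}(u\cdot S)\subset\mathrm{supp}(u)$. One small remark: although you announce that Lemma~\ref{App} will be used, your actual proof never invokes it---everything follows from regularity, the Tauberian property, and Corollary~\ref{equal}; this is fine, and indeed Lemma~\ref{App} is really only essential for the later results on products of pseudomeasures (Lemma~4.6, Proposition~\ref{UCB_subset_C}) rather than for this corollary itself.
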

\begin{cor}\label{UCFAlg}
The space $UCB_\Psi(\widehat{G})$ is a closed subalgebra and $B_\Phi(G)$-submodule of $PM_\Psi(G).$ 
\end{cor}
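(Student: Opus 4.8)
The plan is to verify the two assertions of Corollary~\ref{UCFAlg} separately, leaning on the description of $UCB_\Psi(\widehat{G})$ provided by Corollary~\ref{CSD} as the norm closure of the compactly supported operators in $PM_\Psi(G)$. First I would recall from Corollary~\ref{CSD} that $UCB_\Psi(\widehat{G})$ is already known to be a closed linear subspace, so only closure under multiplication and under the $B_\Phi(G)$-action remains. Since multiplication in $PM_\Psi(G)$ (indeed the Arens product $\odot$, which restricts to composition of operators on the compactly supported part) is norm continuous in each variable, it suffices to check that the product of two compactly supported operators is again compactly supported; but this is exactly the content of the preceding (unnumbered) lemma, which gives $\mbox{supp}(TS)\subset\mbox{supp}(T)\mbox{supp}(S)$, a compact set as the continuous image of $\mbox{supp}(T)\times\mbox{supp}(S)$ under multiplication. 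A density/continuity argument then upgrades this to: the product of two elements of $UCB_\Psi(\widehat{G})$ lies in $UCB_\Psi(\widehat{G})$.

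Second, for the module structure I would use the fact established earlier that $PM_\Psi(G)$ is a $B_\Phi(G)$-module and that $B_\Phi(G)$ acts by norm-bounded operators (the action $u.T$ satisfies $\|u.T\|\le\|u\|_{B_\Phi}\|T\|$). Again by norm continuity of $T\mapsto u.T$, it is enough to show $u.T$ has compact support whenever $T$ does and $u\in B_\Phi(G)$. This follows from regularity of $A_\Phi(G)$ together with the module-map property of the inclusion $PM_\Psi(G)\hookrightarrow CV_\Psi(G)$ (Corollary~\ref{equal}): the support of $u.T$, computed via duality against $A_\Phi(G)$, is contained in $\mbox{supp}(T)$, since for $v\in A_\Phi(G)$ vanishing on a neighbourhood of $\mbox{supp}(T)$ one has $\langle v,u.T\rangle=\langle uv,T\rangle=0$ as $uv$ also vanishes near $\mbox{supp}(T)$. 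Hence $\mbox{supp}(u.T)\subset\mbox{supp}(T)$ is compact, so $u.T\in UCB_\Psi(\widehat{G})$, and the general case follows by approximating an arbitrary element of $UCB_\Psi(\widehat{G})$ in norm by compactly supported operators.

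Putting these together, $UCB_\Psi(\widehat{G})$ is closed under both the multiplication and the $B_\Phi(G)$-action, which, combined with Corollary~\ref{CSD}, gives the claim. The only genuinely delicate point is the interaction between the two different ambient algebras $PM_\Psi(G)$ and $CV_\Psi(G)$ in which ``support'' and ``product'' live: I would be careful to invoke Corollary~\ref{equal} to identify the two notions of support and to note that the product of compactly supported $PM_\Psi(G)$-elements, a priori computed in $CV_\Psi(G)$ via Lemma~\ref{App}, lands back in $PM_\Psi(G)$ (this is implicit in the statement of the preceding lemma). Everything else is a routine density-and-continuity argument.
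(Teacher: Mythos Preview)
Your argument is correct and, for the subalgebra part, follows exactly the paper's route: Corollary~\ref{CSD} reduces to compactly supported operators, and the preceding lemma on $\mbox{supp}(TS)\subset\mbox{supp}(T)\,\mbox{supp}(S)$ does the rest via norm continuity of multiplication.

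For the $B_\Phi(G)$-submodule part you take a slightly different path than the paper. You work with the compact-support description and show $\mbox{supp}(u.T)\subset\mbox{supp}(T)$ directly. The paper instead appeals to the \emph{original} definition $UCB_\Psi(\widehat{G})=\overline{A_\Phi(G).PM_\Psi(G)}$: since $A_\Phi(G)$ is an ideal in $B_\Phi(G)$, for $u\in B_\Phi(G)$ and $v\in A_\Phi(G)$ one has $u.(v.T)=(uv).T\in A_\Phi(G).PM_\Psi(G)$, and norm continuity of the action passes this to the closure. The paper's argument is shorter and avoids any appeal to supports or to Corollary~\ref{equal}; your argument is also valid and has the mild advantage of yielding the extra information $\mbox{supp}(u.T)\subset\mbox{supp}(T)$, though that is not needed here.
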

\begin{proof}
The fact that $UCB_\Psi(\widehat{G})$ is a closed subalgebra is a consequence of the above results on the support of an element of $PM_\Psi(G).$ Further, it is clear from the definition of $UCB_\Psi(\widehat{G})$ that $UCB_\Psi(\widehat{G})$ is a $B_\Phi(G)$-submodule of $PM_\Psi(G).$
\end{proof}
\begin{cor}
\mbox{}
\begin{enumerate}[(i)]
\item $\mathscr{M}(\widehat{G})$ and $\mathscr{M}^d(\widehat{G})$ are closed subalgebras and $B_\Phi(G)$- submodules of $PM_\Psi(G).$
\item $\mathscr{M}(\widehat{G})\subset UCB_\Psi(\widehat{G})\cap WAP_\Psi(\widehat{G}).$
\item $\mathscr{M}^d(\widehat{G})\subset UCB_\Psi(\widehat{G})\cap AP_\Psi(\widehat{G}).$
\end{enumerate}
\end{cor}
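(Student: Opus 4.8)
The plan is to dispose of the three items in turn, using only the facts already recorded for $M(G)$, $\ell^1(G)$, $AP_\Psi(\widehat G)$, $WAP_\Psi(\widehat G)$ and $UCB_\Psi(\widehat G)$, together with the elementary behaviour of composition of operators in $PM_\Psi(G)$.

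For (i), closedness is immediate from the definitions. For the algebra structure I would observe that on the generating sets the product (composition) of $PM_\Psi(G)$ is transparent: since $\mu*(\nu*f)=(\mu*\nu)*f$ we get $T_\mu T_\nu=T_{\mu*\nu}$ with $\mu*\nu\in M(G)$ for $\mu,\nu\in M(G)$, and, restricting to point masses, $\delta_x\delta_y=\delta_{xy}$, so $\ell^1(G)$ is multiplicatively closed as well. Since composition is norm continuous on $\mathcal{B}(L^\Phi(G))$ and $M(G)$ (resp.\ $\ell^1(G)$) is norm dense in the norm-closed space $\mathscr{M}(\widehat G)$ (resp.\ $\mathscr{M}^d(\widehat G)$), both are closed subalgebras of $PM_\Psi(G)$. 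For the $B_\Phi(G)$-module structure I would use that $u.\mu=u\mu\in M(G)$ for $u\in B_\Phi(G)$ (already proved) and $u.\delta_x=u(x)\delta_x\in\ell^1(G)$ (the computation appearing in the proof of Proposition~\ref{PTC}); since $T\mapsto u.T$ is norm continuous for each fixed $u$, these properties survive the passage to norm closures.

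For (ii), the inclusion $\mathscr{M}(\widehat G)\subset WAP_\Psi(\widehat G)$ has already been established. For $\mathscr{M}(\widehat G)\subset UCB_\Psi(\widehat G)$ I would first reduce to measures of compact support: by inner regularity these are $\|\cdot\|_{M(G)}$-dense in $M(G)$, hence $\|\cdot\|_{PM_\Psi(G)}$-dense (using $\|\mu\|_{PM_\Psi(G)}\le\|\mu\|_{M(G)}$), and therefore dense in $\mathscr{M}(\widehat G)$. For such a $\mu$, regarding $T_\mu$ as a functional on $A_\Phi(G)$ via $\langle u,T_\mu\rangle=\int u\,d\mu$, its support equals $\mbox{supp}(\mu)$, which is compact; by Corollary~\ref{CSD} every element of $PM_\Psi(G)$ with compact support lies in $UCB_\Psi(\widehat G)$, and the latter is norm closed, so $\mathscr{M}(\widehat G)\subset UCB_\Psi(\widehat G)$. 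Item (iii) then follows formally: $\ell^1(G)\subset M(G)$ as subspaces of $PM_\Psi(G)$ gives $\mathscr{M}^d(\widehat G)\subset\mathscr{M}(\widehat G)\subset UCB_\Psi(\widehat G)$ by (ii), while $\ell^1(G)\subset AP_\Psi(\widehat G)$ (already proved) and the norm closedness of $AP_\Psi(\widehat G)$ from Lemma~\ref{WAP_Prop}(i) give $\mathscr{M}^d(\widehat G)\subset AP_\Psi(\widehat G)$. The only point that is not purely formal is the identification $\mbox{supp}(T_\mu)=\mbox{supp}(\mu)$ for compactly supported $\mu$, which I expect to be the main, though routine, obstacle: it rests on the regularity and Tauberian properties of $A_\Phi(G)$, which let one separate points of $\mbox{supp}(\mu)$ from sets disjoint from it by elements of $A_\Phi(G)$.
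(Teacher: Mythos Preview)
Your proof is correct and follows essentially the same route as the paper: both reduce (ii) and (iii) to the already established inclusions $M(G)\subset WAP_\Psi(\widehat G)$, $\ell^1(G)\subset AP_\Psi(\widehat G)$, together with $M(G)\subset UCB_\Psi(\widehat G)$ and the norm closedness of the target spaces. The only differences are in presentation. For the inclusion $M(G)\subset UCB_\Psi(\widehat G)$ you spell out the reduction to compactly supported measures and invoke Corollary~\ref{CSD}, which is exactly what the paper has in mind when it says this inclusion ``follows''; note that you in fact only need $\mbox{supp}(T_\mu)\subset\mbox{supp}(\mu)$, not equality, and that direction is immediate from regularity of $A_\Phi(G)$, so the ``obstacle'' you flag is lighter than you suggest. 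For (i) the paper simply cites Corollary~\ref{UCFAlg}, meaning that the same support-and-density mechanism used there applies verbatim; your direct argument via $T_\mu T_\nu=T_{\mu*\nu}$ and norm continuity of composition is cleaner and avoids that detour. One small remark: the identity $u.\delta_x=u(x)\delta_x$ for $u\in B_\Phi(G)$ is not literally in the proof of Proposition~\ref{PTC}, but it is the obvious one-line computation $\langle v,u.\delta_x\rangle=(uv)(x)=u(x)\langle v,\delta_x\rangle$.
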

\begin{proof}
(i) follows from Corollary \ref{UCFAlg}. (ii) and (iii) follows from the fact that $M(G)$ and $\ell^1(G)$ are contained in $UCB_\Psi(\widehat{G})\cap WAP_\Psi(\widehat{G})$ and $UCB_\Psi(\widehat{G})\cap AP_\Psi(\widehat{G})$ respectively.
\end{proof}
\begin{prop}\label{UCB_subset_C}
Let $G$ be a locally compact group. Then $UCB_\Psi(\widehat{G})\subset C_\Psi(\widehat{G}).$
\end{prop}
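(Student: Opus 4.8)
The plan is to reduce the statement, through a short chain of reductions, to one concrete computation. First, $C_\Psi(\widehat{G})$ is norm closed in $PM_\Psi(G)$: for fixed $S_1,S_2\in PF_\Psi(G)$ the map $T\mapsto S_1T+TS_2$ is continuous for $\|\cdot\|_{PM_\Psi(G)}$ and $PF_\Psi(G)$ is norm closed, so a norm limit of elements of $C_\Psi(\widehat{G})$ again lies in $C_\Psi(\widehat{G})$. By Corollary \ref{CSD}, $UCB_\Psi(\widehat{G})$ is the norm closure of the set of compactly supported elements of $PM_\Psi(G)$, so it is enough to show $T\in C_\Psi(\widehat{G})$ whenever $T\in PM_\Psi(G)$ has compact support. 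Finally, since $PF_\Psi(G)$ is the norm closure of $\{T_h:h\in L^1(G)\}$, contains $0$, and composition of bounded operators is norm continuous, for such a $T$ it suffices to prove that $T\circ T_f$ and $T_f\circ T$ lie in $PF_\Psi(G)$ for every $f\in C_c(G)$; here $T_f$ denotes the operator $\xi\mapsto f*\xi$ on $L^\Psi(G)$.

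The key step is that $T\circ T_f$ is convolution by an $L^1$-function. Since $T\in CV_\Psi(G)$ commutes with right convolution by $L^1$-functions and $f\in C_c(G)\subseteq L^\Psi(G)$, for every $\eta\in L^1(G)\cap L^\Psi(G)$ we have $(T\circ T_f)(\eta)=T(f*\eta)=T(f)*\eta$. As $L^1(G)\cap L^\Psi(G)\supseteq C_c(G)$ is dense in $L^\Psi(G)$ and both $T\circ T_f$ and convolution by $w:=T(f)\in L^\Psi(G)$ are bounded on $L^\Psi(G)$, it follows that $T\circ T_f=T_w$. Moreover $w$ has compact support, contained in $\mbox{supp}(T)\cdot\overline{\mbox{supp}(f)}$ by the support calculus established above (Lemma \ref{App}, Corollary \ref{equal}, and the product formula for supports); being a compactly supported element of $L^\Psi(G)$, it therefore lies in $L^1(G)$ by Hölder's inequality for complementary Orlicz spaces. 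Hence $w\in L^1(G)$ and $T\circ T_f=T_w\in PF_\Psi(G)$.

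For $T_f\circ T$ I would pass to transposes. Since $\Phi,\Psi\in\Delta_2$, the spaces $L^\Phi(G)$ and $L^\Psi(G)$ are reflexive, so $S\mapsto S'$ is an isometric, anti-multiplicative isomorphism of $\mathcal{B}(L^\Psi(G))$ onto $\mathcal{B}(L^\Phi(G))$; being ultraweakly continuous in both directions it carries $PM_\Psi(G)$ onto $PM_\Phi(G)$ and preserves compactness of supports, and it sends the operator $T_g$ of convolution by $g\in L^1(G)$ on $L^\Psi(G)$ to the operator $T_{\widetilde g}$ of convolution by the reflected function $\widetilde g(x)=g(x^{-1})\Delta(x^{-1})\in L^1(G)$ on $L^\Phi(G)$ (with $\widetilde g\in C_c(G)$ when $g\in C_c(G)$). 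Thus $(T_f\circ T)'=T'\circ T_{\widetilde f}$, which is precisely of the form treated in the previous paragraph, but now in $\mathcal{B}(L^\Phi(G))$ and with the roles of $\Phi$ and $\Psi$ exchanged; running that argument gives $(T_f\circ T)'=T_{w'}$ for some $w'\in L^1(G)$, and transposing back (using reflexivity) yields $T_f\circ T=T_{\widetilde{w'}}\in PF_\Psi(G)$. Combining the two cases, every compactly supported $T\in PM_\Psi(G)$ lies in $C_\Psi(\widehat{G})$, and the Proposition follows.

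I expect the main obstacle to be the two points taken on faith above: the transpose bookkeeping of the last paragraph — that $S\mapsto S'$ identifies $PM_\Psi(G)$ with $PM_\Phi(G)$ compatibly with supports and with the module structure, and that a convolution operator transposes to a convolution operator — and the support estimate $\mbox{supp}\,T(f)\subseteq\mbox{supp}(T)\cdot\overline{\mbox{supp}(f)}$. Both are standard facts in the theory of convolution operators on Orlicz spaces, but they are the places where the material of \cite{LK1} and the support calculus must be invoked with some care. If one prefers to avoid transposes altogether, one can instead handle $T_f\circ T$ directly via Lemma \ref{App}, approximating $T$ in the ultrastrong topology by operators $T_{g_\alpha}$ with $g_\alpha\in C_c(G)$ supported in a fixed compact neighbourhood of $\mbox{supp}(T)$; the difficulty then becomes upgrading the ultrastrong convergence $T_{f*g_\alpha}\to T_f\circ T$ to convergence in $\|\cdot\|_{PM_\Psi(G)}$, which is exactly what the transpose route circumvents.
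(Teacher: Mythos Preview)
Your argument is correct and diverges from the paper at the key step. Both begin identically: reduce via Corollary~\ref{CSD} to $T$ with compact support and via density of $C_c(G)$ in $PF_\Psi(G)$ to $S=T_f$ with $f\in C_c(G)$. The paper then applies Lemma~\ref{App} once more to approximate $T$ ultrastrongly by $T_g$, $g\in C_c(G)$, and concludes from $T_fT_g=T_{f*g}$---exactly the route you sketch, and worry about, in your final paragraph. You instead compute $T\circ T_f=T_{T(f)}$ directly from the $CV_\Psi$-property, use the support calculus and H\"older to place $T(f)\in L^1(G)$, and handle $T_f\circ T$ by transposing to $L^\Phi$. Your approach makes explicit the passage from approximation to membership in $PF_\Psi(G)$---the ultrastrong-to-norm upgrade you flag is a genuine point the paper leaves implicit---at the price of the transpose bookkeeping; the paper's version is shorter and treats the two products $T_fT$ and $TT_f$ symmetrically, but relies on the reader to supply that convergence argument.
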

\begin{proof}
As $PF_\Psi(G)$ is a vector space, note that, in order to prove this proposition, it is enough to show that $ST$ and $TS$ are in $PF_\Psi(G)$ for any $S\in PF_\Psi(G)$ and $T\in UCB_\Psi(\widehat{G}).$ Further, as $C_c(G)$ is norm dense in $PF_\Psi(G)$ we can, without loss of generality, assume that $S=T_f$ for some $f\in C_c(G).$ Now, by Corollary \ref{CSD}, we can assume that $T$ has compact support. On the other hand, by Corollary \ref{equal} and Lemma \ref{App}, it is enough to assume that $T$ is of the form $T_g$ for some $g\in C_c(G).$ Now the conclusion follows from the fact that $T_{f*g}=T_fT_g.$
\end{proof}
\begin{cor}
The space $C_\Psi(\widehat{G})$ is a closed subalgebra and an $A_\Phi(G)$-submodules of $PM_\Psi(G).$
\end{cor}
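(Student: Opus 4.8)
The plan is to establish the three assertions separately: that $C_\Psi(\widehat{G})$ is a norm-closed linear subspace of $PM_\Psi(G)$, that it is closed under the composition product of $PM_\Psi(G)$, and that it is an $A_\Phi(G)$-submodule. The last of these I would obtain almost for free from Proposition \ref{UCB_subset_C}; the first is a soft continuity argument; the only real (though modest) work is the second, which comes down to associativity of composition together with two applications of the defining property of $C_\Psi(\widehat{G})$.

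For the linear-subspace-and-closed part: linearity is immediate, since for $T,T'\in C_\Psi(\widehat{G})$ and scalars $a,b$ one has $S_1(aT+bT')+(aT+bT')S_2 = a(S_1T+TS_2)+b(S_1T'+T'S_2)\in PF_\Psi(G)$ for all $S_1,S_2\in PF_\Psi(G)$. For norm-closedness, suppose $T_n\to T$ in $PM_\Psi(G)$ with each $T_n\in C_\Psi(\widehat{G})$; since composition of operators is norm-continuous, $S_1T_n+T_nS_2\to S_1T+TS_2$ in norm for every $S_1,S_2\in PF_\Psi(G)$, and as $PF_\Psi(G)=\overline{L^1(G)}$ is norm-closed in $PM_\Psi(G)$, the limit lies in $PF_\Psi(G)$. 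Hence $T\in C_\Psi(\widehat{G})$.

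For the subalgebra part, the first thing I would record is that, since $0\in PF_\Psi(G)$, the two-sided condition defining $C_\Psi(\widehat{G})$ is equivalent to the conjunction of the one-sided conditions $ST\in PF_\Psi(G)$ and $TS\in PF_\Psi(G)$ for all $S\in PF_\Psi(G)$. Now let $T_1,T_2\in C_\Psi(\widehat{G})$; their composite $T_1T_2$ stays in $PM_\Psi(G)$ (the composition product keeping us inside $PM_\Psi(G)$ is already implicit in the preceding material, e.g. in the support lemma). For $S_1,S_2\in PF_\Psi(G)$, associativity gives
$$S_1(T_1T_2)+(T_1T_2)S_2=(S_1T_1)T_2+T_1(T_2S_2).$$
From $T_1\in C_\Psi(\widehat{G})$ we get $S_1T_1\in PF_\Psi(G)$, and from $T_2\in C_\Psi(\widehat{G})$ we get $T_2S_2\in PF_\Psi(G)$; applying the one-sided conditions once more — $T_2\in C_\Psi(\widehat{G})$ to the element $S_1T_1$, and $T_1\in C_\Psi(\widehat{G})$ to the element $T_2S_2$ — yields $(S_1T_1)T_2\in PF_\Psi(G)$ and $T_1(T_2S_2)\in PF_\Psi(G)$, hence their sum is in $PF_\Psi(G)$, so $T_1T_2\in C_\Psi(\widehat{G})$.

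Finally, for the module structure: for $u\in A_\Phi(G)$ and $T\in C_\Psi(\widehat{G})\subset PM_\Psi(G)$ we have $u.T\in A_\Phi(G).PM_\Psi(G)\subset UCB_\Psi(\widehat{G})\subset C_\Psi(\widehat{G})$, where the last inclusion is Proposition \ref{UCB_subset_C}. I therefore do not anticipate a genuine obstacle: the one substantive input is Proposition \ref{UCB_subset_C}, and everything else is bookkeeping, whose only subtlety is keeping straight which factor plays the role of the ``$PF_\Psi(G)$-side'' at each application of the defining property.
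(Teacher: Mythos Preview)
Your proof is correct and follows essentially the same approach as the paper: the paper simply asserts that the closed-subalgebra property ``follows from the definition of $C_\Psi(\widehat{G})$'' and that the $A_\Phi(G)$-module property ``is a direct consequence of Proposition \ref{UCB_subset_C}'', which is exactly what you have spelled out in detail.
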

\begin{proof}
The fact that $C_\Psi(\widehat{G})$ is a closed subalgebra follows from the definition of $C_\Psi(\widehat{G})$ while the other fact that $C_\Psi(\widehat{G})$ is an $A_\Phi(G)$-module is a direct consequence of Proposition \ref{UCB_subset_C}.
\end{proof}
\begin{cor}
Let $G$ be an amenable group. Then
$$WAP_\Psi(\widehat{G})\subset UCB_\Psi(\widehat{G})=\{A_\Phi(G).PM_\Psi(G)\}.$$
\end{cor}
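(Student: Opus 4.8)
The plan is to prove the displayed equality first and then the inclusion. Since $UCB_\Psi(\widehat{G})$ is by definition the norm closure of the set of products $\{u.T : u\in A_\Phi(G),\ T\in PM_\Psi(G)\}$, for the equality it suffices to show that this set of products is already norm closed. Because $G$ is amenable, $A_\Phi(G)$ admits a bounded approximate identity (the Orlicz analogue of Leptin's theorem, available from the amenability theory of \cite{LK1} that also yields \cite[Corollary 6.2]{LK1} and \cite[Proposition 5.5]{LK1}). Viewing $PM_\Psi(G)$ as a Banach $A_\Phi(G)$-module via the action $\langle v,u.T\rangle=\langle uv,T\rangle$, the Cohen--Hewitt factorization theorem then gives that the set $\{A_\Phi(G).PM_\Psi(G)\}$ is exactly the closed essential submodule, hence a closed linear subspace of $PM_\Psi(G)$; in particular it coincides with its own norm closure $UCB_\Psi(\widehat{G})$.

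For the inclusion $WAP_\Psi(\widehat{G})\subset UCB_\Psi(\widehat{G})$, I would fix $T\in WAP_\Psi(\widehat{G})$ and a bounded approximate identity $\{e_\alpha\}$ for $A_\Phi(G)$, say $\|e_\alpha\|_{A_\Phi}\le C$. First one checks that $e_\alpha.T\to T$ in the weak$^*$-topology of $PM_\Psi(G)=A_\Phi(G)^\prime$: for $v\in A_\Phi(G)$ one has $\langle v,e_\alpha.T\rangle=\langle ve_\alpha,T\rangle$, and $ve_\alpha\to v$ in $\|\cdot\|_{A_\Phi}$ forces $\langle ve_\alpha,T\rangle\to\langle v,T\rangle$. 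On the other hand, as $T\in WAP_\Psi(\widehat{G})$ the operator $u\mapsto u.T$ is weakly compact, so the net $\{e_\alpha.T\}$, which lies in the image of the ball of radius $C$, has a subnet $\{e_{\alpha_\beta}.T\}$ converging in the weak topology of $PM_\Psi(G)$ to some $S$. Since weak convergence implies weak$^*$-convergence and weak$^*$-limits are unique, $S=T$.

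It remains to conclude: each $e_{\alpha_\beta}.T$ lies in $\{A_\Phi(G).PM_\Psi(G)\}$, which by the first paragraph equals $UCB_\Psi(\widehat{G})$ and is a norm-closed, hence convex, subspace of $PM_\Psi(G)$; by Mazur's theorem it is therefore weakly closed. As $T$ is the weak limit of the net $\{e_{\alpha_\beta}.T\}\subset UCB_\Psi(\widehat{G})$, we obtain $T\in UCB_\Psi(\widehat{G})$, completing the proof.

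The step I expect to be the main obstacle, or at least the one needing the most care about which external result is invoked, is the existence of a bounded approximate identity for $A_\Phi(G)$ when $G$ is amenable; the remainder is a routine weak$^*$/weak comparison together with Cohen--Hewitt factorization. One should also verify explicitly that Cohen--Hewitt delivers closedness of the raw set of products $\{u.T\}$ rather than merely of its linear span, since it is this closedness that both supplies the displayed equality and makes $UCB_\Psi(\widehat{G})$ weakly closed in the last step.
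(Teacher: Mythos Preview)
Your proposal is correct and follows essentially the same route as the paper: use amenability to obtain a bounded approximate identity for $A_\Phi(G)$ (the paper cites \cite[Theorem 3.1]{LK2} for this), deduce the equality from Cohen's factorization theorem, and for the inclusion show that $e_\alpha.T\to T$ weak$^*$, extract a weakly convergent subnet by weak compactness, and conclude via the weak closedness of the norm-closed subspace $UCB_\Psi(\widehat{G})$. The only differences are cosmetic: you prove the equality first and invoke Mazur explicitly, whereas the paper phrases the last step as ``norm topology and weak topology coincide'' on a subspace and appeals to Cohen's factorization only at the end.
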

\begin{proof}
Let $T\in WAP_\Psi(\widehat{G}).$ Since $G$ is amenable, by \cite[Theorem 3.1]{LK2}, it possesses a bounded approximate identity, say $\{u_\alpha\}.$ Then $u_\alpha.T\in UCB_\Psi(\widehat{G}),$ for all $\alpha.$ Note that $u_\alpha.T$ converges to $T$ in the weak*-topology of $PM_\Psi(G).$ On the other hand, using the definition of $WAP_\Psi(\widehat{G}),$ it follows that $\{u_\alpha\}$ has a subnet $\{u_{\alpha_\beta}\}$ such that $u_{\alpha_\beta}.T$ converges in the weak topology of $PM_\Psi(G).$ Let $T^\prime$ be the weak limit of $\{u_{\alpha_\beta}.T\}.$ Then, $T=T^\prime.$ As $UCB_\Psi(\widehat{G})$ is a vector space, norm topology and weak topology coincides and hence $T\in UCB_\Psi(\widehat{G}).$

The other equality is a consequence of \cite[Theorem 3.1]{LK2} and Cohen's factorization theorem.
\end{proof}
\begin{cor}\label{inc}
Let $G$ be a locally compact group. We have the following inclusions:
\begin{enumerate}[(i)]
\item If $G$ is amenable then $$PF_\Psi(\widehat{G})\subset AP_\Psi(\widehat{G})+PF_\Psi(\widehat{G})\subset WAP_\Psi(\widehat{G})\subset UCB_\Psi(\widehat{G});$$
\item If $G$ is discrete then $$PF_\Psi(\widehat{G})=\mathscr{M}^d(\widehat{G})=UCB_\Psi(\widehat{G})=C_\Psi(\widehat{G})\subset AP_\Psi(\widehat{G})\subset WAP_\Psi(\widehat{G});$$
\item If $G$ is compact then $$UCB_\Psi(\widehat{G})=C_\Psi(\widehat{G})=PM_\Psi(G).$$
\end{enumerate}
\end{cor}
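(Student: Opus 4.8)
The plan is to obtain all three chains by assembling facts already present in the excerpt: Lemma \ref{WAP_Prop}; Corollary \ref{CSD}, which identifies $UCB_\Psi(\widehat{G})$ with the norm closure of the compactly supported operators in $PM_\Psi(G)$; Proposition \ref{UCB_subset_C}; and the earlier statements that $\overline{M(G)}\subset WAP_\Psi(\widehat{G})$, that $\ell^1(G)\subset AP_\Psi(\widehat{G})$, and that $WAP_\Psi(\widehat{G})\subset UCB_\Psi(\widehat{G})$ when $G$ is amenable. Almost every inclusion then reduces to quoting one of these; the only step that needs an honest argument is the identification $UCB_\Psi(\widehat{G})=\mathscr{M}^d(\widehat{G})$ in the discrete case.

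I would first dispose of (i) and (iii). In (i), the inclusion $PF_\Psi(\widehat{G})\subset AP_\Psi(\widehat{G})+PF_\Psi(\widehat{G})$ is trivial because $0\in AP_\Psi(\widehat{G})$; for the middle one, $AP_\Psi(\widehat{G})\subset WAP_\Psi(\widehat{G})$ by Lemma \ref{WAP_Prop}(iii) and $PF_\Psi(\widehat{G})=\overline{L^1(G)}\subset\overline{M(G)}\subset WAP_\Psi(\widehat{G})$, and since $WAP_\Psi(\widehat{G})$ is a norm-closed subspace of $PM_\Psi(G)$ by Lemma \ref{WAP_Prop}(i), the sum sits inside it; the last inclusion is exactly the preceding corollary. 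In (iii), when $G$ is compact every $\mbox{supp}(T)$ is a closed, hence compact, subset of $G$, so the compactly supported operators exhaust $PM_\Psi(G)$ and Corollary \ref{CSD} gives $UCB_\Psi(\widehat{G})=PM_\Psi(G)$; since $UCB_\Psi(\widehat{G})\subset C_\Psi(\widehat{G})\subset PM_\Psi(G)$ — the first inclusion by Proposition \ref{UCB_subset_C}, the second by the definition of $C_\Psi(\widehat{G})$ — all three coincide.

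For (ii), since $G$ is discrete the Haar measure is counting measure, so $L^1(G)=\ell^1(G)$ and $PF_\Psi(\widehat{G})=\overline{L^1(G)}=\overline{\ell^1(G)}=\mathscr{M}^d(\widehat{G})$. The inclusion $\mathscr{M}^d(\widehat{G})\subset UCB_\Psi(\widehat{G})$ is already known, and by Corollary \ref{CSD} the reverse follows once I show that every $T\in PM_\Psi(G)$ with compact — hence finite — support is a finite linear combination of point masses. For this I would use that each singleton $\{x\}$ is open, so $\chi_{\{x\}}=\delta_x\in A_\Phi(G)$ (indeed $\delta_x=\delta_e*\check{\delta_{x^{-1}}}$), and observe that for $u\in A_\Phi(G)$ and $F=\mbox{supp}(T)$ the function $u-u\chi_F\in A_\Phi(G)$ vanishes on $F$, hence has support disjoint from $\mbox{supp}(T)$ and therefore pairs to zero with $T$; consequently $\langle u,T\rangle=\langle u\chi_F,T\rangle=\sum_{x\in F}u(x)\langle\chi_{\{x\}},T\rangle$ for every $u$, that is $T=\sum_{x\in F}\langle\chi_{\{x\}},T\rangle\delta_x\in\ell^1(G)$. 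This gives $\mathscr{M}^d(\widehat{G})=UCB_\Psi(\widehat{G})$. Next, $UCB_\Psi(\widehat{G})\subset C_\Psi(\widehat{G})$ by Proposition \ref{UCB_subset_C}; conversely $\delta_e\in\ell^1(G)=L^1(G)\subset PF_\Psi(\widehat{G})$ and $T_{\delta_e}=I$, so for $T\in C_\Psi(\widehat{G})$ the choice $S_1=\delta_e$, $S_2=0$ in the definition of $C_\Psi(\widehat{G})$ yields $T=\delta_e T\in PF_\Psi(\widehat{G})$, whence all four spaces coincide. Finally $\mathscr{M}^d(\widehat{G})=\overline{\ell^1(G)}\subset AP_\Psi(\widehat{G})$ because $\ell^1(G)\subset AP_\Psi(\widehat{G})$ and $AP_\Psi(\widehat{G})$ is norm closed, and $AP_\Psi(\widehat{G})\subset WAP_\Psi(\widehat{G})$ by Lemma \ref{WAP_Prop}(iii).

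The one genuine obstacle is the middle step of (ii): verifying that a finitely supported pseudomeasure is a finite linear combination of point masses. There the openness of singletons in a discrete group (so that $\chi_{\{x\}}\in A_\Phi(G)$) together with the locality of the support — available since $A_\Phi(G)$ is regular and Tauberian — do all the work. Every other inclusion in the corollary is immediate from a result already proved, so once this point is secured the rest is bookkeeping.
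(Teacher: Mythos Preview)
Your argument is correct, and for part (i) it matches the paper's reasoning (the paper just cites \cite{LK2} without spelling out the chain). For parts (ii) and (iii), however, you take a different and somewhat longer road than the paper.

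In (iii) the paper simply observes that when $G$ is compact the constant function $1$ lies in $A_\Phi(G)$, so $T=1\cdot T\in A_\Phi(G)\cdot PM_\Psi(G)\subset UCB_\Psi(\widehat{G})$ for every $T$, and then $UCB_\Psi(\widehat{G})\subset C_\Psi(\widehat{G})\subset PM_\Psi(G)$ finishes it. Your route via Corollary~\ref{CSD} (every support is compact) is perfectly valid but uses a heavier tool.

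In (ii) the paper's proof is just the single remark that $\ell^1(G)$ contains the identity $\delta_e$. The point is that you already have the cycle
\[
PF_\Psi(\widehat{G})=\mathscr{M}^d(\widehat{G})\subset UCB_\Psi(\widehat{G})\subset C_\Psi(\widehat{G})\subset PF_\Psi(\widehat{G}),
\]
where the first equality is $L^1(G)=\ell^1(G)$, the next inclusion is the earlier corollary $\mathscr{M}^d(\widehat{G})\subset UCB_\Psi(\widehat{G})$, the next is Proposition~\ref{UCB_subset_C}, and the last is exactly your $\delta_e$-trick (taking $S_1=\delta_e$, $S_2=0$). Once this loop closes, all four spaces coincide with no further work. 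Your direct proof that $UCB_\Psi(\widehat{G})\subset\mathscr{M}^d(\widehat{G})$ via ``finitely supported pseudomeasures are finite combinations of point masses'' is correct (and you rightly flag that regularity and the Tauberian property are what justify the pairing-to-zero step), but it is entirely unnecessary: you already proved $C_\Psi(\widehat{G})\subset PF_\Psi(\widehat{G})$ two lines later, which closes the loop anyway. So the ``one genuine obstacle'' you identify is not actually an obstacle at all.
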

\begin{proof}
(i) is a consequence of \cite[Theorem 3.1]{LK2}. (ii) is a consequence of the fact that the algebra $\ell^1(G)$ has the identity element $\delta_e,$ while (iii) follows from the fact that the algebra $A_\Phi(G)$ possesses the constant $1$ function. 
\end{proof}

{\bf Notation}. \mbox{ }
\begin{enumerate}[(i)]
\item If $E\subset G$ is closed, define $PM_\Psi^E(G)$ to be the weak-* closure of $$\{T\in PM_\Psi(G):supp(T)\subset E\}.$$
\item For any closed set $E\subset G,$ let $$A^E_\Phi(G)=\{u\in A_\Phi(G):supp(u)\subset E\}.$$
\item If $u\in A_\Phi(G),$ let $u^\perp=\{T\in PM_\Psi(G):u\cdot T=0\}.$
\item $S_{B}^{\Phi} = \big\{u\in B_{\Phi}(G):\|u\|_{B_\Phi}=u(e)=1 \big\} $
\item $S_{A}^{\Phi} =\big\{u\in A_{\Phi}(G):\|u\|_{A_{\Phi}}=u(e)=1 \big\} $
\end{enumerate}
\begin{cor}\label{Rel_Bet_EU}
Let $u\in A_\Phi(G)$ be such that $\mbox{supp}(u)\subset U,$ where $U$ is an open subset of $G.$ Then $PM_\Psi^{G\setminus U}\subseteq u^\perp.$
\end{cor}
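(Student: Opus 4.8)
The plan is to show that $u\cdot T=0$ for every $T\in PM_\Psi(G)$ with $\mbox{supp}(T)\subset G\setminus U$, and then conclude the inclusion for the weak-$*$ closure $PM_\Psi^{G\setminus U}$ by a continuity argument. First I would fix such a $T$ and recall that $u\cdot T$ is defined by $\langle v,u\cdot T\rangle=\langle uv,T\rangle$ for all $v\in A_\Phi(G)$. The key point is that $uv$ is supported inside $\mbox{supp}(u)\subset U$, which is disjoint from $\mbox{supp}(T)$. Since $A_\Phi(G)$ is a regular Tauberian algebra and, by Corollary \ref{equal}, the support of $T$ as a functional on $A_\Phi(G)$ agrees with its support as an element of $CV_\Psi(G)$, a functional on $A_\Phi(G)$ annihilates every element of $A_\Phi(G)$ whose support is disjoint from $\mbox{supp}(T)$; this is the standard duality between support of a functional and the ideal of functions vanishing near it. Hence $\langle uv,T\rangle=0$ for all $v\in A_\Phi(G)$, so $u\cdot T=0$, i.e. $T\in u^\perp$.

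Next I would pass to the weak-$*$ closure. The set $u^\perp=\{T\in PM_\Psi(G):u\cdot T=0\}$ is the kernel of the map $T\mapsto u\cdot T$, and for fixed $u$ this map is weak-$*$ continuous on $PM_\Psi(G)$: indeed, for each $v\in A_\Phi(G)$, $T\mapsto\langle v,u\cdot T\rangle=\langle uv,T\rangle$ is weak-$*$ continuous because $uv\in A_\Phi(G)$. Therefore $u^\perp$ is weak-$*$ closed. Since it contains $\{T\in PM_\Psi(G):\mbox{supp}(T)\subset G\setminus U\}$ by the previous paragraph, it contains the weak-$*$ closure of that set, which is precisely $PM_\Psi^{G\setminus U}$. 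This gives $PM_\Psi^{G\setminus U}\subseteq u^\perp$.

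The only delicate point is justifying that a functional on $A_\Phi(G)$ vanishing-support disjoint from its own support actually annihilates the function; this is exactly where regularity and the Tauberian property of $A_\Phi(G)$ enter, together with Corollary \ref{equal} to make the support notion unambiguous. Everything else is routine: the support of a product is contained in the intersection of supports (here $\mbox{supp}(uv)\subset\mbox{supp}(u)$ suffices), and weak-$*$ continuity of $T\mapsto u\cdot T$ is immediate from the definition of the module action. I expect no real obstacle beyond carefully citing the support-annihilation fact already implicit in the framework of regular semisimple Banach function algebras.
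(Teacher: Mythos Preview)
Your argument is correct and is exactly the natural proof the paper has in mind; the paper states this corollary without proof, treating it as immediate from regularity of $A_\Phi(G)$ and the definition of support. Your two steps---annihilation for $T$ with $\mbox{supp}(T)\subset G\setminus U$ via the support duality, then passage to the weak-$*$ closure using weak-$*$ continuity of $T\mapsto u\cdot T$---fill in precisely what is left implicit.
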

\begin{rem}\label{Exis_Mean_Convex}
Let $u\in S_{B}^{\Phi}.$ Then, by proceeding as in \cite[Proposition 5.4]{LK1}, we can show the existence of a $m\in \overline{\{uv:v\in S_{A}^{\Phi}\}}^{w^*}$ such that $ v.m=m $ for all $ v\in S_{B}^{\Phi} $. In particular, $\overline{\{uv:v\in S_{A}^{\Phi}\}}^{w^*}\cap TIM_\Psi(\widehat{G})$ is nonempty.
\end{rem}
\begin{thm}
Let $G$ be a second countable locally compact group and if for some norm separable subspace $X\subset PM_\Psi(G)$ and some open neighbourhood $U$ of $e$ in $G$ such that $$UCB_\Psi(\widehat{G})\subset \overline{WAP_\Psi(\widehat{G})+X+PM_\Psi^{G\setminus U}(G)}^{\|\cdot\|_{PM_\Psi(G)}},$$ then $G$ is discrete.
\end{thm}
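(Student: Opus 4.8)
The plan is to argue by contradiction: assume $G$ is non-discrete and derive that the displayed inclusion fails. The strategy mirrors the classical Granirer-type argument, where one exploits the fact that in a non-discrete group one can find a sequence of elements of $A_\Phi(G)$, supported in shrinking neighbourhoods of $e$, which behave like an approximate identity for $UCB_\Psi(\widehat{G})$ but which ``detect'' a singular part that cannot be captured by a norm-separable perturbation plus a weakly almost periodic piece plus something supported away from $e$. Concretely, first I would fix a suitable $T\in UCB_\Psi(\widehat{G})$ with compact support contained in $U$ (such $T$ exist in abundance by Corollary \ref{CSD}), and by Remark \ref{Exis_Mean_Convex} choose, for each $u\in S_A^\Phi$ supported in a small neighbourhood of $e$, a mean $m_u\in\overline{\{uv:v\in S_A^\Phi\}}^{w^*}\cap TIM_\Psi(\widehat G)$.

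Next I would use the hypothesis to write $T = W + Y + R$ (approximately, in norm) with $W\in WAP_\Psi(\widehat{G})$, $Y\in X$, and $R\in PM_\Psi^{G\setminus U}(G)$. The point of the decomposition is that each summand is controlled:
\begin{enumerate}[(a)]
\item by Corollary \ref{Rel_Bet_EU}, $u\cdot R=0$ whenever $u\in A_\Phi(G)$ is supported in $U$, so the $PM_\Psi^{G\setminus U}(G)$-part is annihilated once we multiply by such $u$;
\item by Theorem \ref{WAP_Inv_Mean} together with part (ii) of its proof, multiplying $W$ by the net $\{u_\alpha\}$ approximating a topological invariant mean sends $u_\alpha\cdot W$ weakly to $\langle W,m\rangle I$, a single fixed operator;
\item the norm-separable subspace $X$ is the place where the contradiction must be forced: in a non-discrete group the set of ``$\delta$-like'' functionals one can produce as $w^*$-limits of $u\cdot T$ over $u$ supported near $e$ is, in an appropriate sense, non-separable, or at least cannot be squeezed into $X$ plus the above controlled pieces.
\end{enumerate}

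Thus the key computation is to multiply the approximate identity $\{u_n\}$ (supported in shrinking neighbourhoods $U_n$ of $e$, as furnished by \cite[Proposition 5.5]{LK1}) against the decomposition $T\approx W+Y+R$, pass to the limit using Proposition \ref{PTC} and the second-countability of $G$ to extract convergent subsequences, and observe that the $WAP$-part contributes only a multiple of $I$, the $PM_\Psi^{G\setminus U}(G)$-part contributes nothing, so all the ``variation'' of $u_n\cdot T$ must live in the norm-separable $X$. I would then exhibit an uncountable, suitably separated family of targets $u\cdot T$ — indexed by elements $x$ in a neighbourhood of $e$, using that $G$ is non-discrete so such $x$ form an infinite set accumulating at $e$ — contradicting norm-separability of $X$.

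The main obstacle, and the step requiring genuine care, is precisely (c): making rigorous the claim that non-discreteness produces a non-separable obstruction. One must produce, inside $\overline{\{u\cdot T : u\in S_A^\Phi, \operatorname{supp}(u)\subset U\}}$, an uncountable family that is $\varepsilon$-separated in $\|\cdot\|_{PM_\Psi(G)}$ modulo $\mathbb{C}I + PM_\Psi^{G\setminus U}(G)$; this is where the structure of $A_\Phi(G)$ as a regular Tauberian algebra with spectrum $G$, and the interplay between supports of functions in $A_\Phi(G)$ and supports of operators in $PM_\Psi(G)$ (Corollary \ref{equal}, and the support-product Lemma), are used in an essential way. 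Once that separated family is in hand, combining it with the $WAP$-collapse-to-$\mathbb{C}I$ and the annihilation of the $PM_\Psi^{G\setminus U}(G)$-part forces $X$ to contain an uncountable $\varepsilon$-separated set, contradicting separability, and hence $G$ must be discrete.
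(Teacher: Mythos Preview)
Your outline is genuinely different from the paper's argument, and the gap you yourself flag at (c) is real and, as far as I can see, not repairable along the lines you suggest. You propose to fix a single $T\in UCB_\Psi(\widehat G)$ and exhibit inside $\{u\cdot T:u\in S_A^\Phi,\ \operatorname{supp}(u)\subset U\}$ an uncountable family that is $\varepsilon$-separated modulo $\mathbb{C}I+PM_\Psi^{G\setminus U}(G)$. But there is no reason the $A_\Phi(G)$-orbit of a fixed $T$ should be that large: for many choices of $T$ (e.g.\ any $T\in WAP_\Psi(\widehat G)$, or any $T$ with finite support) the orbit is relatively norm-compact or norm-separable, and you give no mechanism for selecting a $T$ whose orbit is non-separable in the required quotient. ``Indexing by points $x$ near $e$'' does not help unless you explain how distinct $x$'s actually give separated $u_x\cdot T$, and nothing in the support calculus you cite produces such a lower bound. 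In short, the contradiction with separability of $X$ is asserted but not established.

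The paper avoids this difficulty entirely by arguing \emph{directly} rather than by contradiction, and by shifting the weight from a non-separability obstruction to a \emph{weak sequential completeness} argument. One fixes $u\in S_A^\Phi$ with $\operatorname{supp}(u)\subset U$ and $m_0\in\overline{\{uv:v\in S_A^\Phi\}}^{w^*}\cap TIM_\Psi(\widehat G)$, and then defines
\[
A=\overline{\{uv:v\in S_A^\Phi\}}^{w^*}\cap\bigl\{m: u_n\cdot m=m\ \text{and}\ \langle T_k,m\rangle=\langle T_k,m_0\rangle\ \forall n,k\bigr\},
\]
where $\{u_n\}$ is countable dense in $S_A^\Phi$ (separability of $A_\Phi(G)$) and $\{T_k\}$ is countable dense in $X$. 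One checks $A\subset TIM_\Psi(\widehat G)$; then the hypothesis, Theorem~\ref{WAP_Inv_Mean}, and Corollary~\ref{Rel_Bet_EU} force any $m\in A$ to agree with $m_0$ on $UCB_\Psi(\widehat G)$, hence on all of $PM_\Psi(G)$, so $A=\{m_0\}$. Because $A$ is cut out of a weak-$*$ compact set by \emph{countably many} weak-$*$ closed conditions, second countability lets one reach $m_0$ as the weak-$*$ limit of a \emph{sequence} in $\{uv:v\in S_A^\Phi\}\subset A_\Phi^{\overline U}(G)$; this sequence is weakly Cauchy, and weak sequential completeness of $A_\Phi^{\overline U}(G)$ (\cite[Theorem~3.9]{LK1}) places $m_0$ inside $A_\Phi(G)$. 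Thus $A_\Phi(G)\cap TIM_\Psi(\widehat G)\neq\emptyset$, and \cite[Proposition~6.5]{LK1} gives discreteness. The separability of $X$ is used only to make $A$ countably determined, not to derive a non-separability contradiction.
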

\begin{proof}
Since $G$ is locally compact, we shall, without loss of generality, assume that $U$ has compact closure. Since $G$ is second countable and the pair $(\Phi,\Psi)$ satisfy the $\Delta_2$-condition, it follows that $A_\Phi(G)$ is separable. In particular, $S_A^\Phi$ is separable. Let $\{u_n\}$ be a countable dense subset of $S_A^\Phi.$ Further, by assumption, the space $X$ is also separable. Let $\{T_n\}$ be a countable dense subset of $X.$ Let $u\in S_A^\Phi$ be such that $\mbox{supp}(u)\subset U.$ Then, by Remark \ref{Exis_Mean_Convex}, there exists $m_0\in \overline{\{uv:v\in S_{A}^{\Phi}\}}^{w^*}\cap TIM_\Psi(\widehat{G}).$ Hence, we fix this $m_0.$ 

We now prove the theorem in several steps. 

(i) Let $$A=\overline{\{uv:v\in S_{A}^{\Phi}\}}^{w^*}\cap \{m\in A_\Phi(G)^{''}:\left(t_{u_n}^{tt}-I\right)(m)=0\mbox{ and } \langle T_n,m \rangle=\langle T_n,m_0 \rangle\forall\ n\in\mathbb{N}\}.$$ Here $I$ denotes the identity map on $A_\Phi(G)^{''}$ and $t_u:A_\Phi(G)\rightarrow A_\Phi(G)$ is given by $t_u(v)=uv.$ It is clear that $A$ is convex and weak*-compact.

(ii) We claim that $A\subset TIM_\Psi(\widehat{G}).$ Let $m\in A.$ Since, $u_n.m=m$ for all $n\in \mathbb{N},$ it follows that $u.m=m$ for all $u\in S_A^\Phi.$ Let $u\in S_A^\Phi,$ $v\in S^\Phi_B$ and $T\in PM_\Psi(G).$ Then,
$$\langle T,v.m \rangle = \langle T,v.(u.m) \rangle = \langle T,(v.u).m \rangle  = \langle T,m \rangle.$$ Thus the conclusion follows from \cite[Theorem 5.6]{LK1}.

(iii) We now claim that the set $A$ is singleton. Observe that, by Corollary \ref{Rel_Bet_EU} and by our assumption, $$UCB_\Psi(\widehat{G})\subset \overline{WAP_\Psi(\widehat{G})+X+u^\perp}^{\|\cdot\|_{PM_\Psi(G)}}.$$ It is easy to see that $m_0\in A.$ Suppose that $m\in A.$ By Theorem \ref{WAP_Inv_Mean}, it is clear that $m=m_0$ on $WAP_\Psi(\widehat{G}).$ It is also clear from the definition of $A$ that $m=m_0$ on $X.$ Note that, by the definition of $v^\perp,$ $m(v^\perp)=\{0\}.$ Thus, $m=m_0$ on $\overline{WAP_\Psi(\widehat{G})+X+u^\perp}^{\|\cdot\|_{PM_\Psi(G)}}.$ In particular, $m=m_0$ on $UCB_\Psi(\widehat{G}).$ Let $T\in PM_\Psi(G)$ and $u\in S^\Phi_A.$ Then 
\begin{align*}
\langle T,m \rangle =& \langle T,u.m \rangle\ (\mbox{by }(ii))\\ =& \langle u.T,m \rangle \\ =& \langle u.T,m_0 \rangle\ (m=m_0 \mbox{ on }UCB_\Psi(\widehat{G}))\\ =& \langle T,u.m_0 \rangle\\  =& \langle T,m_0 \rangle\ (\mbox{again by }(ii))
\end{align*}
Thus $m=m_0$ on $PM_\Psi(G),$ showing that $A$ is just a singleton. 

(iv) Here we show that the group $G$ is discrete. The idea here is to use \cite[Proposition 6.5]{LK1}. By definition of compactness and the fact that $G$ is second countable, there exists a sequence $\{u_n\}\subset \{uv:v\in S_{A}^{\Phi}\}$ such that $u_n$ converges to $m_0$ in the weak*-topology. In particular, the sequence $\{u_n\}$ is a weakly Cauchy sequence in $A_\Phi^{\overline{U}}(G).$ As $A_\Phi^{\overline{U}}(G)$ is weakly sequentially complete (see \cite[Theorem 3.9]{LK1}), there exists $u_0\in A_\Phi(G)$ such that $\langle u_0,T \rangle=\langle T,m_0 \rangle.$ In particular, it follows that $A_\Phi(G)\cap TIM_\Psi(\widehat{G})\neq \emptyset.$ Now the conclusion follows from \cite[Proposition 6.5]{LK1}.
\end{proof}

\section*{Acknowledgement}
The first author would like to thank the University Grants Commission, India, for research grant.

\end{document}